\documentclass{amsart}

\usepackage{graphicx}
\usepackage[centertags]{amsmath}
\usepackage{amsfonts}
\usepackage{amssymb}
\usepackage{amsthm}
\usepackage{newlfont}
\usepackage{hyperref}
\usepackage{mathrsfs} 
\usepackage{yfonts} 

\newtheorem*{thm*}{Theorem}
\newtheorem{thm}{Theorem}[section]
\newtheorem{cor}[thm]{Corollary}
\newtheorem{lem}[thm]{Lemma}

\theoremstyle{definition}
\newtheorem{defn}[thm]{Definition}

\theoremstyle{remark}

\numberwithin{equation}{section}

\DeclareSymbolFont{bbold}{U}{bbold}{m}{n}
\DeclareSymbolFontAlphabet{\mathbbold}{bbold}

\newcommand{\N}{\mathbb{N}}
\newcommand{\Z}{\mathbb{Z}}
\newcommand{\R}{\mathbb{R}}

\newcommand{\C}{\mathbb{C}}

\newcommand{\acts}{\curvearrowright}
\newcommand{\Stab}{\mathrm{Stab}}

\newcommand{\pmp}{p{$.$}m{$.$}p{$.$}}

\newcommand{\cA}{\mathcal{A}}
\newcommand{\cF}{\mathcal{F}}

\newcommand{\sP}{\mathscr{P}}

\newcommand{\symd}{\triangle}
\newcommand{\salg}{\sigma \text{-}\mathrm{alg}}

\newcommand{\sH}{\mathrm{H}}
\newcommand{\rh}{h}
\newcommand{\ksh}{h^{\mathrm{KS}}}

\newcommand{\Borel}{\mathcal{B}}
\newcommand{\given}{\mathbin{|}}

\newcommand{\res}{\restriction}

\newcommand{\BL}{\mathscr{L}}
\newcommand{\HS}{\mathscr{H}}
\newcommand{\cU}{\mathcal{U}} 
\newcommand{\cB}{B} 
\newcommand{\Exp}{\mathbb{E}}

\newcommand{\aprod}{\rtimes_{\mathrm{alg}}}
\newcommand{\Sub}{\mathrm{Sub}}

\newcommand{\sinai}{Sinai}

\begin{document}

\title[The Koopman representation and positive Rokhlin entropy]{The Koopman representation and positive Rokhlin entropy}
\author{Brandon Seward}
\address{Courant Institute of Mathematical Sciences, New York University, 251 Mercer Street, New York, NY 10003, U.S.A.}
\email{b.m.seward@gmail.com}
\keywords{Rokhlin entropy, non-amenable groups, completely positive entropy, Koopman representation, mixing}
\subjclass[2010]{37A35, 37A15}

\begin{abstract}
In a prior paper, the author generalized the classical factor theorem of {\sinai} to actions of arbitrary countably infinite groups. In the present paper, we use this theorem and the techniques of its proof in order to study connections between positive entropy phenomena and the Koopman representation. Following the line of work initiated by Hayes for sofic entropy \cite{H, Ha}, we show in a certain precise manner that all positive entropy must come from portions of the Koopman representation that embed into the left-regular representation. By combining this result with the generalized factor theorem of the previous paper, we conclude that for actions having completely positive outer entropy, the Koopman representation must be isomorphic to the countable direct sum of the left-regular representation. This generalizes a theorem of Dooley--Golodets for countable amenable groups. As a final consequence, we observe that actions with completely positive outer entropy must be mixing, and when the group is non-amenable they must be strongly ergodic and have spectral gap. We also prove generalizations of these results that apply relative to sub-$\sigma$-algebras and apply to non-free actions.
\end{abstract}
\maketitle

\section{Introduction}

In this paper we use entropy theory to study the spectral structure of probability-measure-preserving ({\pmp}) actions of general countable groups. We will use an extension of the classical Kolmogorov--{\sinai} entropy called Rokhlin entropy that was introduced by the author in 2014 \cite{S1} and is defined for actions of general countable groups. If $G$ is a countable group, $G \acts (X, \mu)$ is an aperiodic {\pmp} action, $\cF$ is a $G$-invariant sub-$\sigma$-algebra, and $\xi \subseteq \Borel(X)$ is a collection of sets, then the \emph{(outer) Rokhlin entropy of $\xi$ relative to $\cF$} is defined to be
$$\rh_G(\xi \given \cF) = \inf \Big\{ \sH(\alpha \given \cF) : \alpha \text{ a countable partition with } \xi \subseteq \salg_G(\alpha) \vee \cF \Big\},$$
where $\salg_G(\alpha)$ is the smallest $G$-invariant $\sigma$-algebra containing $\alpha$, and $\sH(\alpha \given \cF)$ is the conditional Shannon entropy of $\alpha$ relative to $\cF$. When $\xi = \Borel(X)$ is the Borel $\sigma$-algebra of $X$, we write instead $\rh_G(X, \mu \given \cF)$ for the \emph{Rokhlin entropy of $(X, \mu)$ relative to $\cF$}. When $\cF = \{X, \varnothing\}$ is trivial we simply write $\rh_G(\xi)$ and $\rh_G(X, \mu)$ for the Rokhlin entropy of $\xi$ and the Rokhlin entropy of $(X, \mu)$, respectively. In this paper, $\rh_G$ will always denote Rokhlin entropy.

A closely related concept is the notion of sofic entropy. Sofic entropy is an entropy notion for {\pmp} actions of countable sofic groups that was created in 2008 through ground-breaking work of Lewis Bowen \cite{B10b} and improvements by Kerr and Li \cite{KL11a}. Both sofic entropy and Rokhlin entropy coincide with classical Kolmogorov--{\sinai} entropy when the acting group is amenable \cite{AS, B12, KL13}, and it is an important open question whether Rokhlin entropy is equal to sofic entropy whenever sofic entropy is defined and not minus infinity.

The present paper is a sequel to \cite{S18}. In \cite{S18} the author proved the following theorem, which is a generalization of the classical {\sinai} factor theorem for $\Z$.

\begin{thm}[Seward, \cite{S18}] \label{intro:sinai}
Let $G$ be a countably infinite group, let $G \acts (X, \mu)$ be a free {\pmp} action, and let $(L, \lambda)$ be a standard probability space. If $\rh_G(X, \mu) \geq \sH(L, \lambda)$ then the action $G \acts (X, \mu)$ factors onto the Bernoulli shift $G \acts (L^G, \lambda^G)$.
\end{thm}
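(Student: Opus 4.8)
The plan is to construct the factor map onto $G\acts(L^G,\lambda^G)$ as an $\malg$-limit of partitions of $X$ that are \emph{exactly} Bernoulli over larger and larger finite subsets of $G$, using the entropy hypothesis to ensure that $X$ always carries enough ``fresh'' randomness to drive the construction. We first reduce to the case $\sH(L,\lambda)<\infty$: if $\sH(L,\lambda)=\infty$ then $\rh_G(X,\mu)=\infty$, and the construction below applies \emph{mutatis mutandis} with an unlimited entropy budget. Since a space of finite Shannon entropy is purely atomic, we may then take $L$ to be a countable set and $\lambda$ fully supported. We do not pass to ergodic components, as Rokhlin entropy is not known to integrate over them.

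From the definition of Rokhlin entropy, fix a countable partition $\beta$ with $\salg_G(\beta)=\Borel(X)$ and $\sH(L,\lambda)<\sH(\beta)$; such a $\beta$ exists, since one may take a near-optimal generating partition (its Shannon entropy being arbitrarily close to $\rh_G(X,\mu)\ge\sH(L,\lambda)$) and refine slightly to make the inequality strict. In the borderline case $\sH(L,\lambda)=\rh_G(X,\mu)$ only an arbitrarily small positive entropy surplus is available, which tightens the bookkeeping below but does not reshape it. Note that the theorem is equivalent to the existence of a partition $\alpha=\{A_\ell:\ell\in L\}$ of $X$ with $\mu(A_\ell)=\lambda(\ell)$ whose $G$-translates $\{g\alpha:g\in G\}$ form an independent family, since $x\mapsto(\alpha(g^{-1}x))_{g\in G}$ is then a $G$-equivariant factor map onto $(L^G,\lambda^G)$. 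To build such an $\alpha$, fix an exhaustion $\{e\}=T_0\subseteq T_1\subseteq\cdots$ of $G$ by finite sets and construct partitions $\alpha_n$ into $L$-indexed pieces so that (i) the joint distribution of $\{g\alpha_n:g\in T_n\}$ equals the product $\lambda^{T_n}$; (ii) $\mu(\alpha_n\symd\alpha_{n+1})<2^{-n}$; and (iii) $\alpha_{n+1}$ agrees with $\alpha_n$ off a small set, on which it is redefined using fresh randomness extracted from $\beta$. Then by (ii) the limit $\alpha=\lim_n\alpha_n$ exists in $\malg$, by (i) its $G$-translates are i.i.d.\ with marginal $\lambda$, and $\mu(A_\ell)=\lambda(\ell)$; this yields the factor map, reducing the theorem to the construction of the $\alpha_n$.

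The inductive step producing $\alpha_{n+1}$ from $\alpha_n$ is the crux, and is where the entropy hypothesis is used. To upgrade exact independence from the window $T_n$ to the window $T_{n+1}$, one re-randomizes $\alpha_n$ near the newly added group elements using an $L$-valued, $\lambda$-distributed partition that is conditionally independent of the already-frozen data $\{g\alpha_n:g\in T_n\}$; this re-randomization must be carried out \emph{equivariantly} and \emph{measurably out of the generating partition $\beta$}, and the total modification over all stages must be summable so that $\lim_n\alpha_n$ exists. For $G=\Z$, or amenable $G$, one could arrange this on a Rokhlin tower; for a general countable group there is no Rokhlin lemma, and I would instead invoke the measurable-combinatorial machinery --- based on the marker lemma for free actions --- that the author developed in earlier work on the Krieger finite-generator theorem, which supplies, for each finite $W\subseteq G$ and each $\varepsilon>0$, combinatorial ``skeletons'' inside $X$ meeting all but an $\varepsilon$-fraction of the space and along which controlled i.i.d.-$\lambda$ patterns can be installed. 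Applying this with $W=T_{n+1}$ and $\varepsilon\to0$, while tracking the strict inequality $\sH(L,\lambda)<\sH(\beta)$ as an \emph{additive} budget through conditional Shannon entropies of the $\beta$-process (which is legitimate because Shannon entropy, unlike Rokhlin entropy, is additive) in order to certify that unused randomness remains available at every stage, should produce partitions $\alpha_n$ with properties (i)--(iii).

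This is the step where I expect essentially all of the difficulty to lie: one must simultaneously achieve exact marginals, exact finite-window independence, equivariance, summability of the corrections, and compliance with the entropy budget, with no Rokhlin lemma available in the non-amenable case and with the budget squeezed to its margin when $\sH(L,\lambda)=\rh_G(X,\mu)$. Granting the construction of the $\alpha_n$, the limiting argument of the second paragraph completes the proof.
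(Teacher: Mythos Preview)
This theorem is not proved in the present paper at all: it is quoted from \cite{S18} and, as the introduction says explicitly, is used ``as a black box.'' So there is no proof here to compare your sketch against directly.

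That said, the paper does import several ingredients from the proof in \cite{S18}---Lemmas~\ref{lem:slice}, \ref{lem:order}, and \ref{lem:sent}---and these make clear that the actual route is quite different from yours. Rather than building an $\malg$-convergent sequence of partitions that are exactly Bernoulli over growing finite windows $T_n\subseteq G$, the argument in \cite{S18} selects a single aperiodic element $T\in[E_G^X]$ together with an $\cF$-measurable $f:X\to[0,1]$ that slices each $G$-orbit into $T$-orbits (Lemma~\ref{lem:slice}), uses this to define an ``external past'' $\sigma$-algebra $\sP_\xi$, and then bounds Rokhlin entropy by the Kolmogorov--{\sinai} entropy of the single $\Z$-action $T$ relative to $\sP_\xi$ (Lemma~\ref{lem:sent}). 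The Bernoulli factor is then produced via relative $\Z$-entropy theory for $T$, not by a finite-window exhaustion over $G$ driven by marker lemmas.

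Your outline is candid about where the difficulty sits, but the step you flag is a genuine gap rather than a routine detail. Nothing in the sketch converts the inequality $\sH(\beta)>\sH(L,\lambda)$ into a $\lambda$-distributed partition that is \emph{conditionally independent} of the already-frozen data at each stage; the Krieger finite-generator machinery you invoke produces small-entropy generating partitions, which is a different problem from extracting independent randomness. It is exactly this missing mechanism that the $T$-and-external-past structure of \cite{S18} supplies, by reducing to a setting where classical (relative) {\sinai} theory for $\Z$ is available.
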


The arguments of the present paper rely heavily upon the above theorem, taken as a black box, and upon some of the ingredients appearing in its proof.

Spectral consequences of entropic phenomena were first uncovered by Rokhlin and {\sinai} in 1961 \cite{RS61}. They proved that the Koopman representation of any CPE action of the integers must be isomorphic to the countable direct sum of the left-regular representation (i.e. must have countable Lebesgue spectrum). Recall that an action $G \acts (X, \mu)$ of a countable amenable group $G$ is \emph{CPE}, or has \emph{completely positive entropy}, if every non-trivial factor action has strictly positive Kolmogorov--{\sinai} entropy. The Rokhlin--{\sinai} theorem was generalized to $\Z^d$ by Kami{\'n}ski in 1981 \cite{Ka81}, to $\bigoplus_{n \in \N} \Z$ by Kami{\'n}ski and Liardet in 1994 \cite{KaL94}, to finitely-generated torsion-free nilpotent groups by Golodets and Sinel'shchikov in 2002 \cite{GoSi02}, and finally to arbitrary countable amenable groups by Dooley and Golodets in 2002 \cite{DoGo}.

The definition of CPE immediately extends to the contexts of sofic and Rokhlin entropy, however there is a stronger alternative generalization of CPE in these settings as well. This alternative generalization arises from the fact that entropy is no longer monotone under factor maps. For example, for an action $G \acts (X, \mu)$ and a factor action $G \acts (Y, \nu)$ under a map $\phi : (X, \mu) \rightarrow (Y, \nu)$, it is possible that $\rh_G(Y, \nu) > 0$ while $\rh_G(\phi^{-1}(\Borel(Y))) = 0$. Thus, we can say an action $G \acts (X, \mu)$ has \emph{completely positive outer Rokhlin entropy}, written CPE$^+$, if $\rh_G(\cF) > 0$ for all $\mu$-non-trivial $\cF \subseteq \Borel(X)$ (a similar strengthening of CPE exists in the setting of sofic entropy, but we won't need this). In this paper we will work with both CPE and CPE$^+$.

In the context of actions of non-amenable groups, the first exploration of the connection between entropy and the Koopman representation was undertaken by Ben Hayes using the framework of sofic entropy \cite{H,Ha}. His work, specifically the presentation of his results, strongly influenced the nature of the present paper. We emphasize that our proofs are entirely distinct from Hayes,' as his techniques are firmly rooted in finitary properties that are specific to soficity, while our techniques draw upon the ingredients of the proof of Theorem \ref{intro:sinai}.

Our main theorem is the following. It generalizes a nearly identical theorem due to Hayes in the case of sofic entropy \cite{H}. Recall that two representations of $G$ are singular if no non-zero sub-representation of one embeds into the other.

\begin{thm} \label{intro:koop}
Let $G \acts (X, \mu)$ be a free {\pmp} action, and let $\rho : G \rightarrow \cU(\BL^2_\mu(X))$ be the corresponding Koopman representation. Also let $\lambda : G \rightarrow \cU(\ell^2(G))$ be the left-regular representation. If $\HS$ is a $\rho(G)$-invariant closed subspace of $\BL^2_\mu(X)$ and $\rho|_\HS$ is singular with $\lambda$, then $\rh_G(\salg(\HS)) = 0$.
\end{thm}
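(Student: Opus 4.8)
The plan is to argue by contradiction: suppose $\rh_G(\salg(\HS)) > 0$. The key idea is that positive Rokhlin entropy of the sub-$\sigma$-algebra generated by $\HS$ should, via Theorem \ref{intro:sinai} (or more precisely the machinery underlying its proof), produce a Bernoulli factor supported on $\salg(\HS)$; and a Bernoulli factor carries a copy of the left-regular representation inside its Koopman representation, contradicting singularity of $\rho|_\HS$ with $\lambda$. So the first step is to pass from ``$\rh_G(\salg(\HS)) > 0$'' to the existence of a $G$-invariant sub-$\sigma$-algebra $\cF \subseteq \salg(\HS)$ on which the action is (isomorphic to) a nontrivial Bernoulli shift $G \acts (L^G, \lambda^G)$. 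Naively one wants to apply Theorem \ref{intro:sinai} to the factor $G \acts (X, \salg(\HS), \mu)$, but that factor need not be free, and its Rokhlin entropy need not be bounded below by $\log|L|$ for any fixed $L$. The correct move is to invoke the relative/non-free generalizations alluded to in the abstract, or better, to extract from the \emph{proof} of Theorem \ref{intro:sinai} the statement that \emph{some} positive-entropy sub-$\sigma$-algebra of $\salg(\HS)$ contains a nontrivial two-set independent generator under the $G$-action — i.e. a sequence of i.i.d. (across a syndetic or Følner-like pattern, but for general $G$ just genuinely $G$-independent) $\{0,1\}$-valued functions. This is exactly the kind of ``Bernoulli piece'' that the Sinai-type argument constructs, and it lives measurably inside $\salg(\HS)$, hence the closed span of its associated functions lies in $\HS$.

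The second step is the representation-theoretic input: if $G \acts (B^G, \beta^G)$ is a nontrivial Bernoulli shift and $f \in \BL^2_{\beta^G}(B^G)$ is a single coordinate function normalized to have mean zero and norm one, then $\{ \rho(g) f : g \in G\}$ is an orthonormal set and $g \mapsto \rho(g) f$ extends to an isometric $G$-equivariant embedding of $\ell^2(G)$ into $\BL^2_{\beta^G}(B^G)$ — that is, $\lambda$ embeds into the Koopman representation of the Bernoulli shift, and in fact into its restriction to the closed span of the first Wiener chaos. Pulling this back through the factor map $X \to B^G$ (which intertwines Koopman representations), we obtain an isometric $G$-equivariant copy of $\ell^2(G)$ sitting inside $\BL^2_\mu(X)$, and because the generating function of the Bernoulli factor is $\salg(\HS)$-measurable, this copy of $\ell^2(G)$ lies inside $\HS$. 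Thus $\lambda$ embeds into $\rho|_\HS$, contradicting the hypothesis that $\rho|_\HS$ and $\lambda$ are singular, and completing the proof.

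The main obstacle is the first step, specifically making rigorous the passage from positive Rokhlin entropy of a (possibly non-free, possibly unbounded-entropy) invariant sub-$\sigma$-algebra to an \emph{explicitly measurable} Bernoulli piece inside it. Theorem \ref{intro:sinai} as stated requires freeness and a comparison $\rh_G(X,\mu) \geq \sH(L,\lambda)$; neither is automatic for the factor $(X, \salg(\HS), \mu)$. I expect to handle this by working with the relativized version of the Sinai theorem and its proof: first reduce to the free case by taking a product with a free action of zero entropy supported outside $\HS$ (or by using the version of the theorem that applies to non-free actions, as promised in the abstract), and then, rather than asking for a full Bernoulli factor of a prescribed base, use the fact — implicit in the proof — that any action with positive Rokhlin entropy admits, for some sufficiently small $\epsilon > 0$, a factor onto a Bernoulli shift with base entropy $\epsilon$, and that this factor can be taken to be generated by functions in the given $\sigma$-algebra. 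Controlling the measurability of the constructed generator relative to $\salg(\HS)$, and ensuring the construction does not ``leak'' out of $\HS$, is the delicate technical point; everything after that is soft functional analysis about the first chaos of a Bernoulli shift.
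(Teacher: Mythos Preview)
There is a genuine gap in your second step. You write that ``because the generating function of the Bernoulli factor is $\salg(\HS)$-measurable, this copy of $\ell^2(G)$ lies inside $\HS$.'' But $\salg(\HS)$-measurability only places a function in $\BL^2_\mu(\salg(\HS))$, which is in general far larger than $\HS$ itself (think of $\HS$ as the first chaos of a Gaussian action: then $\salg(\HS)$ is the full $\sigma$-algebra, yet $\HS$ is orthogonal to every higher chaos). So your Bernoulli coordinate function, and the copy of $\lambda$ it generates, sits in $\BL^2_\mu(\salg(\HS))$ but has no reason to meet $\HS$ nontrivially. One could try to repair this by projecting the copy of $\lambda$ onto $\HS$ --- the projection is $G$-equivariant, and if nonzero would give the desired intertwiner by polar decomposition --- but nothing in your construction prevents that projection from vanishing identically. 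Without this link back to $\HS$, no contradiction with singularity follows.

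The paper's proof handles exactly this difficulty, and not by producing a Bernoulli factor at all. Instead it uses the ``past'' machinery underlying Theorem~\ref{intro:sinai} (an ordering on orbits via an element $T$ of the full group and a separating function $f$) directly. From $\HS$ one first extracts a finite partition $\alpha$ with positive Rokhlin entropy and with the convexity property that $\alpha \subseteq \cA$ iff $\alpha \subseteq \salg(\Exp_{\cA}(\HS))$ for every sub-$\sigma$-algebra $\cA$ (Lemma~\ref{lem:convex}); this is the device that keeps everything tethered to $\HS$. One then forms the $\sigma$-algebras $\cA = \bigvee_{n \leq 0} T^n(\alpha) \vee \sP_\alpha$ and $\cA^-$ (same with $n<0$), and the positivity of the Kolmogorov--{\sinai} entropy $\ksh_T(\alpha \given \sP_\alpha)$ forces $(\Exp_{\cA} - \Exp_{\cA^-})(\HS)$ to be nonzero. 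Any nonzero $\zeta$ in this difference has $G$-translates that are pairwise orthogonal, because of the total order on each orbit (Lemma~\ref{lem:order}); this yields an explicit equivariant isometry from the cyclic subspace through $\zeta$ into $\ell^2(G)$. Since $\zeta$ is by construction a projection of elements of $\HS$, the composition with $p_V|_\HS$ is a nonzero intertwiner $\HS \to \ell^2(G)$, contradicting singularity. The essential point your outline misses is that one must build the orthogonal family \emph{out of projections of vectors in $\HS$}, not merely inside $\BL^2_\mu(\salg(\HS))$.
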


We mention that the theorem above and corollaries below are stated only in their simplest form, as we in fact prove both relative and non-free versions of these results (similar to the relative results of Hayes \cite{Ha}). See Theorem \ref{thm:koop} and Corollaries \ref{cor:cpekoop} and \ref{cor:mix}.

With the above theorem we are able to compute the Rokhlin entropy of some Gaussian actions.

\begin{cor}
Let $G$ be a countably infinite group, let $\pi : G \rightarrow \mathcal{O}(\HS)$ be an orthogonal representation on a real separable Hilbert space $\HS$, and let $\lambda_\R : G \rightarrow \mathcal{O}(\ell^2(G, \R))$ be the real left-regular representation of $G$. Suppose that $\pi$ is singular with $\lambda_\R$. Then the Gaussian action $G \acts (X_\pi, \mu_\pi)$ induced by $\pi$ satisfies $\rh_G(X_\pi, \mu_\pi) = 0$.
\end{cor}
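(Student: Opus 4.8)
The plan is to reduce the corollary to Theorem~\ref{intro:koop} by understanding the Koopman representation of a Gaussian action in terms of the defining orthogonal representation $\pi$. Recall that the Gaussian action $G \acts (X_\pi, \mu_\pi)$ is built so that $\HS$ sits isometrically inside $\BL^2_{\mu_\pi}(X_\pi, \R)$ as the "first Wiener chaos," a $G$-invariant closed subspace on which the Koopman representation restricts to a copy of $\pi$. Passing to complexifications, the complex Koopman representation $\rho$ on $\BL^2_{\mu_\pi}(X_\pi)$ decomposes as the orthogonal direct sum of the symmetric tensor powers: $\BL^2_{\mu_\pi}(X_\pi) \cong \bigoplus_{n \geq 0} \HS_\C^{\odot n}$, with $\rho$ acting as $\bigoplus_{n \geq 0} \pi_\C^{\odot n}$, where $\HS_\C$ and $\pi_\C$ denote complexifications and $\odot$ the symmetric tensor power (the $n=0$ summand being the trivial representation on the constants).

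The key step is then to verify that $\rho$ is singular with the complex left-regular representation $\lambda$ of $G$ on $\ell^2(G)$, given the hypothesis that $\pi$ is singular with $\lambda_\R$. First, singularity of $\pi$ with $\lambda_\R$ passes to complexifications, so $\pi_\C$ is singular with $\lambda = (\lambda_\R)_\C$. Next I would use the standard fact that a tensor product $\sigma \otimes \tau$ of unitary representations embeds into a direct sum of copies of $\lambda$ as soon as \emph{one} of the factors does — equivalently, being "singular with $\lambda$" is an ideal-like property stable under tensoring with \emph{arbitrary} representations only in the reverse direction, so I must instead argue directly that $\pi_\C^{\odot n}$ is singular with $\lambda$ for every $n \geq 1$. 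For this, the cleanest route is via the characterization of singularity with $\lambda$ in terms of the associated positive-definite functions, or via Hayes's notion: a representation is singular with $\lambda$ iff it has no nonzero subrepresentation that is "embeddable into $\lambda$," and the class of representations singular with $\lambda$ is closed under taking subrepresentations, countable direct sums, and — crucially here — tensor powers of a fixed singular representation. Establishing this last closure property (that $\pi_\C$ singular with $\lambda$ implies $\pi_\C^{\otimes n}$ singular with $\lambda$) is the main obstacle; I expect to handle it through the spectral/measure-theoretic description of representations weakly contained in $\lambda$ versus those singular to it, showing that the matrix coefficients of $\pi_\C^{\otimes n}$ — which are products of $n$ matrix coefficients of $\pi_\C$ — remain "singular type" (e.g. they lie in the appropriate ideal, or the Fell-topology/weak-containment obstruction persists under multiplication). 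The trivial $n=0$ summand is finite-dimensional and, since $G$ is infinite, is itself singular with $\lambda$; thus every summand $\pi_\C^{\odot n}$ of $\rho$ is singular with $\lambda$, and hence so is $\rho = \bigoplus_n \pi_\C^{\odot n}$.

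With singularity of $\rho$ with $\lambda$ in hand, I would take $\HS$ in Theorem~\ref{intro:koop} to be all of $\BL^2_{\mu_\pi}(X_\pi)$: then $\salg(\HS) = \Borel(X_\pi)$ and the theorem gives $\rh_G(\Borel(X_\pi)) = \rh_G(X_\pi, \mu_\pi) = 0$, which is exactly the claim. One should note that Theorem~\ref{intro:koop} requires the action to be free; Gaussian actions need not be free in general, but this is precisely why the corollary is stated over a countably infinite $G$ and why the more general non-free version alluded to after Theorem~\ref{intro:koop} (Theorem~\ref{thm:koop}) can be invoked if freeness fails — alternatively one checks that the non-free case still yields zero Rokhlin entropy. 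The only genuinely new content beyond citing Theorem~\ref{intro:koop} is the representation-theoretic bookkeeping of the previous paragraph, so the write-up should foreground the Wiener chaos decomposition and the tensor-power stability of singularity with $\lambda$, relegating the rest to standard references on Gaussian actions and unitary representation theory.
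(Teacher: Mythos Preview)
Your approach has a genuine gap at precisely the point you flag as ``the main obstacle.'' The claim that singularity with $\lambda$ is preserved under tensor powers is \emph{false} in general. For $G = \Z$, a unitary representation is singular with $\lambda$ iff its spectral measure class on $\mathbb{T}$ is singular with Lebesgue measure; but there are classical examples of singular probability measures $\mu$ on $\mathbb{T}$ whose convolution square $\mu * \mu$ is absolutely continuous. Taking $\pi_\C$ to be the cyclic representation on $L^2(\mathbb{T}, \mu)$, the tensor square $\pi_\C \otimes \pi_\C$ then contains a cyclic subrepresentation with spectral measure $\mu * \mu$, which embeds into $\lambda$. So $\pi_\C^{\odot 2}$ need not be singular with $\lambda$, and your plan to take $\HS = \BL^2_{\mu_\pi}(X_\pi)$ in the theorem cannot succeed as stated.

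The paper's proof sidesteps this entirely, and the fix is much simpler than what you propose: you do not need the full Koopman representation to be singular with $\lambda$. Take $\mathcal{K}$ to be just the \emph{first} Wiener chaos, so that $\rho|_{\mathcal{K}} \cong \pi_\C$ is singular with $\lambda$ directly by hypothesis. The key fact about Gaussian actions (standard, and recorded in Hayes's paper on sofic entropy of Gaussian actions) is that the first chaos already generates the full Borel $\sigma$-algebra: $\salg_G(\mathcal{K}) = \Borel(X_\pi)$. Applying Theorem~\ref{thm:koop} to this $\mathcal{K}$ then gives $\rh_G(X_\pi, \mu_\pi) = \rh_G(\salg_G(\mathcal{K})) = 0$ immediately, with no need to analyze the higher chaoses at all. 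Your observation about freeness is well taken and is exactly why one invokes Theorem~\ref{thm:koop} rather than Theorem~\ref{intro:koop}.
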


The sofic entropy of Gaussian actions of sofic groups was previously computed by Hayes \cite{Hb} in the case where the sofic entropy is not minus infinity. Combined with the above corollary, we immediately obtain the following consequence.

\begin{cor}
Let $G$ be a countably infinite sofic group and let $\pi : G \rightarrow \mathcal{O}(\HS)$ be an orthogonal representation on a real separable Hilbert space $\HS$. Then the Rokhlin entropy of the corresponding Gaussian action $G \acts (X_\pi, \mu_\pi)$ is equal to the sofic entropy whenever the sofic entropy is not minus infinity.
\end{cor}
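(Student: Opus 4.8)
The plan is to combine the two immediately preceding corollaries. The first corollary computes the Rokhlin entropy of a Gaussian action $G \acts (X_\pi, \mu_\pi)$ in one special case: when the orthogonal representation $\pi$ is singular with the real left-regular representation $\lambda_\R$, it gives $\rh_G(X_\pi, \mu_\pi) = 0$. The result of Hayes \cite{Hb} computes the sofic entropy of $G \acts (X_\pi, \mu_\pi)$ for sofic $G$, in the regime where that sofic entropy is not $-\infty$; in particular, in that regime the sofic entropy is also $0$ precisely when $\pi$ is singular with $\lambda_\R$ (and is positive, indeed $+\infty$, when a nonzero sub-representation of $\pi$ embeds into $\lambda_\R$). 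So the natural strategy is a dichotomy on whether $\pi$ has a nonzero sub-representation embedding into $\lambda_\R$.

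First I would recall (from Hayes \cite{Hb}) the precise form of the sofic-entropy computation for Gaussian actions, isolating the two cases: (a) if $\pi$ is singular with $\lambda_\R$, then the sofic entropy of $G \acts (X_\pi, \mu_\pi)$ equals $0$; (b) otherwise $\pi$ has a nonzero sub-representation that embeds into $\lambda_\R$, and then the sofic entropy is $+\infty$, hence in particular not one of the finite values for which we need to prove equality --- we only claim agreement "whenever the sofic entropy is not minus infinity," and one should check whether the intended claim also covers the $+\infty$ case. In case (a), the first corollary gives $\rh_G(X_\pi, \mu_\pi) = 0$, matching the sofic entropy exactly. In case (b), one uses the general fact that Rokhlin entropy is an upper bound for sofic entropy whenever the latter is not $-\infty$ (this is the standard comparison inequality, valid for any sofic group); since the sofic entropy is $+\infty$, this forces $\rh_G(X_\pi, \mu_\pi) = +\infty$ as well, so again the two agree. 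Thus in all cases where the sofic entropy is not $-\infty$, it equals the Rokhlin entropy.

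The steps in order: (1) split into the singular and non-singular cases according to the $L^2$-decomposition of $\pi$ relative to $\lambda_\R$; (2) in the singular case, invoke the preceding corollary to get $\rh_G = 0$ and cite Hayes \cite{Hb} for sofic entropy $= 0$; (3) in the non-singular case, cite Hayes \cite{Hb} for sofic entropy $= +\infty$, and invoke the Rokhlin-vs-sofic comparison inequality to conclude $\rh_G = +\infty$ as well; (4) conclude equality throughout the stated regime.

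The main obstacle is essentially bookkeeping rather than mathematics: I must make sure the statement of Hayes's computation in \cite{Hb} is quoted with the correct hypotheses and the correct trichotomy of outcomes ($-\infty$, $0$, $+\infty$), and that the direction of the Rokhlin-vs-sofic inequality is the one I need (sofic entropy $\le$ Rokhlin entropy, when sofic entropy $\neq -\infty$). A secondary subtlety is the passage between the real orthogonal representation $\pi$ and its complexification, since the first corollary is phrased for $\lambda_\R$ while the main Theorem \ref{intro:koop} and its Koopman-theoretic machinery are phrased over $\C$; but this is exactly the reduction already carried out in the proof of the first corollary, so it can be cited rather than repeated. No genuinely new estimate is required: the result is a formal consequence of the two corollaries together with Hayes's Gaussian computation and the comparison inequality.
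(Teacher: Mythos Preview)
Your proposal is correct and is essentially the paper's own argument: split into the singular and non-singular cases, use Corollary~\ref{cor:gaussian} together with Hayes' computation in the singular case to get both entropies equal to $0$, and in the non-singular case use Hayes' value of $+\infty$ for the sofic entropy together with the inequality $\text{sofic entropy} \leq \rh_G$ to force $\rh_G = +\infty$. The only superfluous hesitation is your worry about whether the $+\infty$ case is covered---it is, and you handle it correctly.
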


By applying a simple Zorn's lemma argument to Theorem \ref{intro:koop}, one can conclude that for a free CPE$^+$ {\pmp} action $G \acts (X, \mu)$, the Koopman representation restricted to the orthogonal complement of the constants, $\rho|_{\BL^2_\mu(X) \ominus \C}$, must embed into $\lambda^{\oplus \N}$. Conversely, since the Koopman representation of any Bernoulli shift is isomorphic to $\lambda^{\oplus \N}$, it follows from Theorem \ref{intro:sinai} that $\lambda^{\oplus \N}$ embeds into the Koopman representation of any free ergodic {\pmp} action with positive Rokhlin entropy. This leads to the following.

\begin{cor} \label{intro:cpe}
Let $G \acts (X, \mu)$ be a free {\pmp} action, and let $\rho$ be the corresponding Koopman representation. Also let $\lambda$ denote the left-regular representation of $G$. Assume that either (1) $G \acts (X, \mu)$ is CPE$^+$, or (2) $G \acts (X, \mu)$ is CPE and every non-trivial factor action is essentially free. Then $\rho|_{\BL^2_\mu(X) \ominus \C}$ is isomorphic to $\lambda^{\oplus \N}$.
\end{cor}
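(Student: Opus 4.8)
The plan is to combine the two halves of the dichotomy already isolated in the introduction: Theorem \ref{intro:koop} (which forces entropy-bearing subspaces to be absolutely continuous with respect to $\lambda$) and Theorem \ref{intro:sinai} together with the Bernoulli shift computation (which forces positive-entropy actions to ``see'' $\lambda^{\oplus \N}$). Write $\rho_0 = \rho|_{\BL^2_\mu(X) \ominus \C}$. The goal is to show $\rho_0 \cong \lambda^{\oplus \N}$, and by the Lebesgue-type decomposition of unitary representations it suffices to prove two things: (i) $\rho_0$ embeds into $\lambda^{\oplus \N}$, and (ii) $\lambda^{\oplus \N}$ embeds into $\rho_0$.

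For (i), I would argue by contradiction. Suppose $\rho_0$ does not embed into $\lambda^{\oplus \N}$. Decompose $\BL^2_\mu(X) \ominus \C = \HS_a \oplus \HS_s$ where $\rho|_{\HS_a}$ embeds into $\lambda^{\oplus \N}$ and $\rho|_{\HS_s}$ is singular with $\lambda$; this is the standard maximal-absolutely-continuous decomposition, obtained via a Zorn's lemma argument (take a maximal family of mutually orthogonal $\rho(G)$-invariant closed subspaces each embedding into $\lambda$, let $\HS_a$ be the closed span, and check that its orthogonal complement within $\BL^2_\mu(X)\ominus\C$ is singular with $\lambda$ — any invariant subspace of it embedding nontrivially into $\lambda$ would contradict maximality). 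By assumption $\HS_s \neq 0$. Theorem \ref{intro:koop} then gives $\rh_G(\salg(\HS_s)) = 0$. Now $\salg(\HS_s)$ is a non-trivial $G$-invariant sub-$\sigma$-algebra of $\Borel(X)$ (non-trivial because $\HS_s$ contains non-constant functions), so under hypothesis (1) this directly contradicts CPE$^+$. Under hypothesis (2), $\salg(\HS_s)$ defines a non-trivial factor $G \acts (Y,\nu)$ which is essentially free; the quantity $\rh_G(\salg(\HS_s))$ is exactly $\rh_G(Y,\nu)$ via the factor map, and since the action is amenable-or-not this equals the classical/Rokhlin entropy of the factor — CPE of the ambient action forces this to be positive, again a contradiction. (Here I should be slightly careful: CPE a priori only controls Kolmogorov--Sinai entropy when $G$ is amenable, but Rokhlin entropy of a factor being zero forces that factor's outer entropy to vanish, and a standard argument upgrades CPE to the statement that every non-trivial factor has positive Rokhlin entropy.) Either way we conclude $\HS_s = 0$, hence $\rho_0$ embeds into $\lambda^{\oplus \N}$.

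For (ii), note that in either case the action has positive Rokhlin entropy — indeed CPE$^+$ or CPE on a non-trivial system gives $\rh_G(X,\mu) > 0$ — and it is ergodic (CPE and CPE$^+$ both entail ergodicity, since a non-trivial invariant set would generate a zero-entropy factor). Applying Theorem \ref{intro:sinai} with $(L,\lambda)$ chosen so that $0 < \sH(L,\lambda) \le \rh_G(X,\mu)$, the action factors onto a Bernoulli shift $G \acts (L^G, \lambda^G)$. The Koopman representation of a non-trivial Bernoulli shift on $\BL^2 \ominus \C$ is isomorphic to $\lambda^{\oplus \N}$ (a standard Fourier/tensor computation: $\BL^2(L^G,\lambda^G)\ominus\C$ decomposes as a direct sum of tensor powers indexed by finite subsets of $G$, and each such summand is a sum of copies of $\lambda$), and a factor map induces an isometric $G$-equivariant embedding of this into $\rho_0$. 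Hence $\lambda^{\oplus \N}$ embeds into $\rho_0$. Combining (i) and (ii), $\rho_0$ embeds into $\lambda^{\oplus\N}$ and conversely; since $\lambda^{\oplus \N} \cong \lambda^{\oplus\N} \oplus \lambda^{\oplus\N}$, a Schröder--Bernstein argument for unitary representations (or simply absorbing: any representation sitting between $\lambda^{\oplus\N}$ and $\lambda^{\oplus\N}$ is isomorphic to $\lambda^{\oplus\N}$) yields $\rho_0 \cong \lambda^{\oplus \N}$.

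The main obstacle I anticipate is the bookkeeping around hypothesis (2): translating ``CPE'' (a statement about Kolmogorov--Sinai entropy of factors) into a statement that rules out $\rh_G(\salg(\HS_s)) = 0$. The essential-freeness assumption on factors is precisely what makes this work — it lets one pass the generalized Sinai theorem (Theorem \ref{intro:sinai}) down to factors and argue that a zero-Rokhlin-entropy non-trivial factor would itself have a further non-trivial factor of zero Kolmogorov--Sinai entropy (or directly be such), violating CPE. I would want to state a clean lemma to this effect (perhaps it already appears in \cite{S18} or earlier in this paper as Theorem \ref{thm:koop} / Corollary \ref{cor:cpekoop}) rather than reprove it inline. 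The representation-theoretic steps (the Lebesgue decomposition, the Bernoulli Koopman computation, the absorption $\lambda^{\oplus\N}\oplus\lambda^{\oplus\N}\cong\lambda^{\oplus\N}$) are all standard and I would cite them or dispatch them briefly.
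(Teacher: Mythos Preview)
Your overall architecture---Zorn's lemma to isolate a maximal $\lambda$-absolutely-continuous piece, Theorem \ref{intro:koop} on the singular complement, Theorem \ref{intro:sinai} for the reverse embedding, then a Schr\"oder--Bernstein step---is exactly the paper's approach (see Corollary \ref{cor:cpekoop} and Lemma \ref{lem:repiso}). Case (1) and direction (ii) are fine.

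The gap is in case (2), and it is not the one you flagged. You assert that ``the quantity $\rh_G(\salg(\HS_s))$ is exactly $\rh_G(Y,\nu)$ via the factor map,'' but this is precisely the identification that fails for Rokhlin entropy: outer entropy of a sub-$\sigma$-algebra computed in $X$ need not equal the intrinsic Rokhlin entropy of the associated factor. Indeed, the introduction of the paper explicitly gives this as the reason CPE and CPE$^+$ are distinct notions. So from Theorem \ref{intro:koop} you get $\rh_G(\salg(\HS_s)) = 0$ (outer, in $X$), while CPE gives $\rh_G(Z,\eta) > 0$ (intrinsic, in the factor $Z$), and these do not contradict one another. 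Your parenthetical worries about Kolmogorov--\sinai{} versus Rokhlin entropy are a red herring; in this paper CPE is already phrased via Rokhlin entropy.

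The paper's fix is to apply the main theorem not to $(X,\mu)$ but to the factor $(Z,\eta)$ generated by $\HS_s$. Hypothesis (2) guarantees $Z$ is essentially free, so Theorem \ref{intro:koop} applies to it (and the relevant left-regular representation is the same $\lambda$). Inside $Z$ the transported subspace $\HS'$ satisfies $\salg_G(\HS') = \Borel(Z)$, so there is no outer/intrinsic discrepancy: $\rh_G(\salg_G(\HS')) = \rh_G(Z,\eta) > 0$ by CPE. Theorem \ref{intro:koop} then says $\rho_Z|_{\HS'}$ is not singular with $\lambda$; but the factor map identifies $\rho_Z|_{\HS'}$ with $\rho|_{\HS_s}$, contradicting maximality. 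This descent to the factor is exactly where the essential-freeness hypothesis earns its keep.
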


Similarly, as formally recorded in \cite{S18}, Theorem \ref{intro:sinai} combines with the result of Hayes \cite{H} to provide a sofic entropy version of the above corollary.

Finally, we observe that Corollary \ref{intro:cpe} implies the following.

\begin{cor}
Let $G \acts (X, \mu)$ be a free {\pmp} action such that either (1) $G \acts (X, \mu)$ is CPE$^+$, or (2) $G \acts (X, \mu)$ is CPE and every non-trivial factor action is essentially free. Then $G \acts (X, \mu)$ is mixing and for every non-amenable $\Gamma \leq G$ the restricted action $\Gamma \acts (X, \mu)$ has spectral gap and is strongly ergodic.
\end{cor}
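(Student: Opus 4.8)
The plan is to derive everything from Corollary~\ref{intro:cpe}, which tells us that $\rho|_{\BL^2_\mu(X) \ominus \C} \cong \lambda^{\oplus \N}$, together with standard facts relating spectral properties of the Koopman representation to dynamical properties of the action. First, for mixing: an action is mixing if and only if the Koopman representation on $\BL^2_\mu(X) \ominus \C$ is a $C_0$-representation, i.e. $\langle \rho(g) v, w \rangle \to 0$ as $g \to \infty$ for all $v, w$ in the orthogonal complement of the constants. Since $\lambda$ is a $C_0$-representation of $G$ on $\ell^2(G)$ (the matrix coefficients $\langle \lambda(g)\delta_e, \delta_e\rangle = \delta_{g,e}$ vanish off the identity, and more generally matrix coefficients of $\lambda$ lie in $c_0(G)$), any countable direct sum $\lambda^{\oplus \N}$ is still $C_0$: for $v = (v_n)_n, w = (w_n)_n \in \ell^2(G)^{\oplus \N}$, approximate by finitely supported vectors and use that a finite sum of $c_0$ functions is $c_0$. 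Hence $\rho|_{\BL^2_\mu(X)\ominus\C}$ is $C_0$ and the action is mixing.

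Next, for the non-amenable restriction. Fix a non-amenable subgroup $\Gamma \leq G$. The restriction of the Koopman representation $\rho$ of $G$ to $\Gamma$ is exactly the Koopman representation of the restricted action $\Gamma \acts (X,\mu)$, and by Corollary~\ref{intro:cpe} its restriction to $\BL^2_\mu(X) \ominus \C$ is $(\lambda_G|_\Gamma)^{\oplus \N}$, where $\lambda_G$ is the left-regular representation of $G$. Now $\lambda_G|_\Gamma$ is a direct sum of copies of $\lambda_\Gamma$, the left-regular representation of $\Gamma$ (decompose $G$ into right cosets of $\Gamma$; each coset gives a copy of $\ell^2(\Gamma)$ on which $\Gamma$ acts by left translation). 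Therefore $\rho|_\Gamma$ restricted to $\BL^2_\mu(X)\ominus\C$ is a direct sum of copies of $\lambda_\Gamma$, hence is weakly contained in $\lambda_\Gamma$. Because $\Gamma$ is non-amenable, the trivial representation $\mathbbold{1}_\Gamma$ is \emph{not} weakly contained in $\lambda_\Gamma$ (this is one of the standard characterizations of amenability, due to Hulanicki); consequently $\mathbbold{1}_\Gamma$ is not weakly contained in $\rho|_{\Gamma, \BL^2_\mu(X)\ominus\C}$. The statement that $\mathbbold{1}_\Gamma \not\prec \rho|_{\BL^2_\mu(X)\ominus\C}$ is precisely the definition of $\Gamma \acts (X,\mu)$ having \emph{spectral gap}: there is no sequence of unit vectors $v_n \perp \C$ with $\|\rho(\gamma)v_n - v_n\| \to 0$ for all $\gamma$ (equivalently, for all $\gamma$ in a fixed finite generating-type set, but we may state it directly). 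This gives the spectral gap conclusion.

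Finally, strong ergodicity follows from spectral gap: an action has spectral gap if and only if it is strongly ergodic, a standard equivalence (see e.g. Schmidt, or Glasner--Weiss). Indeed, if $(A_n)$ were a sequence of measurable sets with $\mu(\gamma A_n \symd A_n) \to 0$ for all $\gamma \in \Gamma$ but $\mu(A_n)(1 - \mu(A_n)) \not\to 0$, then $v_n := (\mathbbold{1}_{A_n} - \mu(A_n))/\|\mathbbold{1}_{A_n} - \mu(A_n)\|_2$ would be a sequence of unit vectors in $\BL^2_\mu(X)\ominus\C$ that is asymptotically $\rho(\Gamma)$-invariant, contradicting spectral gap. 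Hence $\Gamma \acts (X,\mu)$ is strongly ergodic. (Note the hypotheses of Corollary~\ref{intro:cpe} pass to the restricted action only for spectral purposes via the Koopman computation, so it is cleanest to invoke Corollary~\ref{intro:cpe} once for $G$ and then do the representation-theoretic bookkeeping for $\Gamma$, rather than re-running the entropy arguments for $\Gamma$.)

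The main obstacle, such as it is, is purely bookkeeping: correctly identifying $\rho|_\Gamma$ on the orthogonal complement of the constants with $(\lambda_\Gamma)^{\oplus \N}$ via the coset decomposition of $\lambda_G|_\Gamma$, and then citing the Hulanicki-type characterization of non-amenability ($\mathbbold{1}_\Gamma \not\prec \lambda_\Gamma$) in exactly the form that matches the definition of spectral gap being used. Everything else is a direct translation between spectral and dynamical language.
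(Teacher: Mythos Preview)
Your proposal is correct and follows essentially the same route as the paper: invoke Corollary~\ref{intro:cpe} (the non-relative case of Corollary~\ref{cor:cpekoop}) to identify $\rho|_{\BL^2_\mu(X)\ominus\C}$ with $\lambda^{\oplus\N}$, then read off mixing from the $C_0$ property of $\lambda$, spectral gap for non-amenable $\Gamma$ from Hulanicki's criterion applied to $\lambda_G|_\Gamma \cong \lambda_\Gamma^{\oplus [G:\Gamma]}$, and strong ergodicity from spectral gap; the paper simply defers these deductions to \cite{Ha} rather than spelling them out. One small correction: spectral gap implies strong ergodicity, but the converse fails in general (Schmidt constructed strongly ergodic actions without spectral gap), so drop the ``if and only if'' --- only the forward implication is needed, and that is exactly what your displayed argument proves.
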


\subsection*{Acknowledgments}
This research was partially supported by ERC grant 306494 and Simons Foundation grant 328027 (P.I. Tim Austin). The author thanks Tim Austin and Ben Hayes for helpful and encouraging conversations.

\section{Generalized *-algebras and *-representations}

For a countable set $A$ we write $\ell^2(A)$ for the Hilbert space of square-summable complex-valued functions on $A$, and for $a \in A$ we write $\delta_a$ for the function that is $1$ at $a$ and $0$ elsewhere. If $\HS$ is a Hilbert space then we write $\cB(\HS)$ and $\cU(\HS)$ for the set of bounded operators and the set of unitary operators, respectively. Recall that the \emph{left-regular representation} of $G$ is the unitary representation $\lambda : G \rightarrow \cU(\ell^2(G))$ defined by $\lambda_t \delta_g = \delta_{t g}$. Also recall that for a {\pmp} action $G \acts (X, \mu)$, the \emph{Koopman representation} of $G$ associated to this action is the unitary representation $\rho : G \rightarrow \cU(\BL^2_\mu(X))$ defined by $(\rho(g) f)(x) = f(g^{-1} \cdot x)$.

In the simplest context of (standard non-relative) entropy and free actions, the results of this paper use entropy to compare the Koopman representation $\rho$ and the left-regular representation $\lambda$. However we will work in slightly greater generality by considering both relative entropy and actions which are not necessarily free. In order to do this we will need to work in the setting of $*$-representations of $*$-algebras rather than unitary representations of $G$. This more general perspective was initiated by Ben Hayes \cite{Ha}, and our treatment closely follows his.

In this section we introduce a class of $*$-algebras and corresponding generalizations of the Koopman and left-regular representations. Let us first introduce the $*$-algebras we will work with. Let $G \acts (Y, \nu)$ be a {\pmp} action. Define the algebraic crossed product $\BL^\infty_\nu(Y) \aprod G$ to be the algebra of all finite formal sums
$$\sum_{g \in G} s_g u_g, \quad s_g \in \BL^\infty_\nu(Y) \text{ and } s_g = 0 \text{ for all but finitely many } g,$$
with the relation $u_g s = (s \circ g^{-1}) u_g$. This algebra becomes a $*$-algebra with the convention that
$$\Big( \sum_{g \in G} s_g u_g \Big)^* = \sum_{g \in G} ( \bar{s}_g \circ g) u_{g^{-1}}.$$

Next we introduce a generalized Koopman representation for $\BL^\infty_\nu(Y) \aprod G$. Let $G \acts (X, \mu)$ be a {\pmp} action and let $\phi : (X, \mu) \rightarrow (Y, \nu)$ be a $G$-equivariant factor map. We naturally obtain a $*$-representation $\rho : \BL^\infty_\nu(Y) \aprod G \rightarrow \cB(\BL^2_\mu(X))$ defined by
$$(\rho(s u_g) f)(x) = s(\phi(x)) f(g^{-1} \cdot x).$$
Using $\phi$ we may view $\BL^2_\nu(Y)$ as a subspace of $\BL^2_\mu(X)$. Often we will focus on the restriction of $\rho$ to $\BL^2_\mu(X) \ominus \BL^2_\nu(Y)$.

Lastly, we we introduce a generalized left-regular representation. Denote by $\Sub(G)$ the topological space of all subgroups of $G$. A base for the topology on $\Sub(G)$ is given by the basic open sets $\{H \in \Sub(G) : H \cap T = F\}$ as $F \subseteq T$ range over the finite subsets of $G$. We let $G \acts \Sub(G)$ by conjugation. Fix a $G$-invariant Borel probability measure $\omega$ on $\Sub(G) \times Y$ such that the projection to $Y$ pushes $\omega$ forward to $\nu$. We define an associated $*$-representation
$$\lambda : \BL^\infty_\nu(Y) \aprod G \rightarrow \cB \left( \int_{\Sub(G) \times Y}^\oplus \ell^2(\Gamma \backslash G) \ d \omega(\Gamma, y) \right)$$
by
$$(\lambda(s u_g) f)(\Gamma, y)(\Gamma h) = s(y) f(g^{-1} \Gamma g, g^{-1} \cdot y)(g^{-1} \Gamma h).$$
In particular, if $\xi$ is defined by $\xi(\Gamma, y) = \delta_{\Gamma}$ then
$$(\lambda(s u_g) \xi)(\Gamma, y) = s(y) \delta_{\Gamma g}.$$
For later reference, notice that there is no proper closed $\BL^\infty_\nu(Y) \aprod G$-invariant subspace containing $\xi$, and note that for $s, t \in \BL^\infty_\nu(Y)$ and $g, h \in G$ we have
\begin{align}
\langle \lambda(s u_g) \xi, \lambda(t u_h) \xi \rangle & = \int_{\Sub(G) \times Y} s(y) \overline{t(y)} \langle \delta_{\Gamma g}, \delta_{\Gamma h} \rangle \ d \omega(\Gamma, y)\nonumber\\
 & = \int_{S_{g h^{-1}} \times Y} s(y) \overline{t(y)} \ d \omega(\Gamma, y),\label{eqn:leftreg}
\end{align}
where $S_{g h^{-1}} = \{\Gamma \in \Sub(G) : g h^{-1} \in \Gamma\}$.

\begin{lem} \label{lem:repiso}
Let $G \acts (Y, \nu)$ be a {\pmp} action, and let $\pi_i : \BL^\infty_\nu(Y) \aprod G \rightarrow \cB(\HS_i)$ be two $*$-representations. If $\pi_1$ and $\pi_2$ each embed into one another then they are isomorphic.
\end{lem}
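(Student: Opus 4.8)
The plan is to deduce the lemma from the classical Cantor--Schr\"oder--Bernstein theorem for projections in a von Neumann algebra, which states that if two projections in a von Neumann algebra are each Murray--von Neumann subequivalent to the other then they are equivalent. To set this up, I would form the direct sum $\pi = \pi_1 \oplus \pi_2$ acting on $\HS = \HS_1 \oplus \HS_2$, let $M = \pi(\BL^\infty_\nu(Y) \aprod G)''$ be the von Neumann algebra generated by its image, and let $M'$ denote the commutant of $M$. Since $\pi$ is block diagonal with respect to the decomposition $\HS = \HS_1 \oplus \HS_2$, the orthogonal projections $p_1$ and $p_2$ of $\HS$ onto $\HS_1 \oplus 0$ and $0 \oplus \HS_2$ both lie in $M'$.

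Next I would translate the hypothesis into two subequivalences inside $M'$. By definition, an embedding of $\pi_1$ into $\pi_2$ is an isometry $V : \HS_1 \to \HS_2$ with $V \pi_1(a) = \pi_2(a) V$ for every $a \in \BL^\infty_\nu(Y) \aprod G$. Regarding $V$ as an operator $\tilde V$ on $\HS$ (set equal to zero on the $\HS_2$ summand), a direct block-matrix computation shows $\tilde V \in M'$, $\tilde V^* \tilde V = p_1$, and $\tilde V \tilde V^* \le p_2$, so $p_1$ is subequivalent to $p_2$ in $M'$. Applying the same argument to an embedding of $\pi_2$ into $\pi_1$ gives that $p_2$ is subequivalent to $p_1$ in $M'$.

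The Cantor--Schr\"oder--Bernstein theorem, applied inside $M'$, then produces a partial isometry $U \in M'$ with $U^*U = p_1$ and $UU^* = p_2$. Because $U$ belongs to $M'$ and carries $\HS_1 \oplus 0$ onto $0 \oplus \HS_2$, it restricts to a unitary operator $\HS_1 \to \HS_2$ intertwining $\pi_1$ and $\pi_2$, which is precisely the desired isomorphism. The only genuinely non-routine ingredient is the Schr\"oder--Bernstein theorem for von Neumann algebras, which is classical; the main obstacle, such as it is, is merely the bookkeeping that identifies intertwining isometries with partial isometries in the commutant and identifies embeddings with subequivalences of the coordinate projections $p_1,p_2$. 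Alternatively, one could avoid invoking that theorem and instead run the usual Schr\"oder--Bernstein back-and-forth directly on the invariant subspaces of $\pi$, which in effect reproves it within $M'$.
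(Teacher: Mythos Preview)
Your proposal is correct and follows essentially the same argument as the paper: form the direct sum, observe that the coordinate projections and the extended intertwining isometries all lie in the commutant $\pi(\BL^\infty_\nu(Y)\aprod G)'$, deduce mutual Murray--von Neumann subequivalence of $p_1$ and $p_2$, and invoke the Schr\"oder--Bernstein theorem for projections in a von Neumann algebra (the paper cites \cite[Prop.~V.1.3]{Ta02}) to obtain the intertwining partial isometry. The only cosmetic difference is that the paper names the commutant $M$ directly, whereas you pass through the bicommutant first; since $(\pi(A)'')' = \pi(A)'$, the two are the same algebra.
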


\begin{proof}
Set $\pi = \pi_1 \oplus \pi_2$ and $\HS = \HS_1 \oplus \HS_2$. Consider the von Neumann algebra $M = \pi(\BL^\infty_\nu(Y) \aprod G)'$. Let $V : \HS_1 \rightarrow \HS_2$ be a bounded $\BL^\infty_\nu(Y) \aprod G$-equivariant linear isometry, and define the operator $\tilde{V}$ on $\HS$ by $\tilde{V}(\xi, \eta) = (0, V(\xi))$.  Also let $P_i : \HS \rightarrow \HS_i$, $i = 1, 2$, be the orthogonal projections. Note that $\tilde{V}, P_1, P_2 \in M$. A simple computation shows that $\tilde{V}^* \tilde{V} = P_1$ and $\tilde{V} \tilde{V}^* \leq P_2$, so $P_1$ is sub-equivalent to $P_2$. By symmetry, we also have $P_2$ is sub-equivalent to $P_1$. By Murray-von Neumann equivalence of projections in von Neumann algebras \cite[Prop. V.1.3]{Ta02}, there is $W \in M$ with $W^* W = P_1$ and $W W^* = P_2$. Note that $W \in M$ implies $W$ is $\pi(\BL^\infty_\nu(Y) \aprod G)$-equivariant. One can then check that the restriction of $W$ to $\HS_1 \oplus \{0\}$ is a $\BL^\infty_\nu(Y) \aprod G$-equivariant isometry onto $\{0\} \oplus \HS_2$.
\end{proof}

\section{The Koopman representation of generalized Bernoulli shifts}

It is well known that for any non-trivial probability space $(K, \kappa)$ and any countable group $G$, the Koopman representation of $G$ associated with the Bernoulli shift action $G \acts (K^G, \kappa^G)$ is, after restricting to the orthogonal complement of the constants, isomorphic to $\lambda^{\oplus \N}$, the countable direct-sum of the left-regular representation (i.e. there is a unitary $G$-equivariant isomorphism from $\BL^2_{\kappa^G}(K^G) \ominus \C$ to $\ell^2(G)^{\oplus \N}$). In this section we aim to simply prove containment of $\lambda^{\oplus \N}$ in a more general framework.

Let $(K, \kappa)$ be a standard probability space. We let $G$ act on $K^G$ by the standard left-shift action: $(g \cdot x)(t) = x(g^{-1} t)$ for $g, t \in G$ and $x \in K^G$. For $\Gamma \in \Sub(G)$, we identify $K^{\Gamma \backslash G}$ with the set of points $x \in K^G$ with $\Gamma \subseteq \Stab(x)$, and we consider the corresponding Borel probability measure $\kappa^{\Gamma \backslash G}$ on $K^G$ which is supported on $K^{\Gamma \backslash G}$. If we pick any conjugation-invariant Borel probability measure $\theta$ on $\Sub(G)$ (this is known as an IRS \cite{AGV}), then we obtain a non-free Bernoulli measure on $K^G$ by integrating:
$$\kappa^{\theta \backslash G} := \int_{\Sub(G)} \kappa^{\Gamma \backslash G} \ d \theta(\Gamma).$$
If $\Gamma \in \Sub(G)$ has infinite index in $G$ and $\kappa$ is non-trivial, then $\Stab(x) = \Gamma$ for $\kappa^{\Gamma \backslash G}$-almost-every $x \in K^G$, where $\Stab(x) = \{g \in G : g \cdot x = x\}$ is the stabilizer map. Thus $\theta = \Stab_*(\kappa^{\theta \backslash G})$ whenever $\theta$ is supported on the infinite-index subgroups of $G$. Note that if $\theta = \Stab_*(\mu)$ for a {\pmp} action $G \acts (X, \mu)$, then $\theta$ is supported on the infinite-index subgroups of $G$ if and only if almost-every point has an infinite orbit (such an action is said to be \emph{aperiodic}).

\begin{lem} \label{lem:leftreg}
Let $G \acts (Y, \nu)$ be a {\pmp} action and let $y \in Y \mapsto \Gamma_y \in \Sub(G)$ be a $G$-equivariant Borel map where each $\Gamma_y$ is of infinite index. Let
$$\lambda : \BL^\infty_\nu(Y) \aprod G \rightarrow \cB \left( \int_Y^\oplus \ell^2(\Gamma_y \backslash G) \ d \nu(y) \right)$$
be the generalized left-regular $*$-representation. Fix a non-trivial probability space $(K, \kappa)$ and consider the action $G \acts (X, \mu)$ where
$$(X, \mu) = \left(K^G \times Y, \int_Y \kappa^{\Gamma_y \backslash G} \times \delta_y \ d \nu(y) \right).$$
Let $\rho : \BL^\infty_\nu(Y) \aprod G \rightarrow \cB(\BL^2_\mu(X))$ be the generalized Koopman $*$-representation. Then $\lambda^{\oplus \N}$ embeds into $\rho|_{\BL^2_\mu(X) \ominus \BL^2_\nu(Y)}$.
\end{lem}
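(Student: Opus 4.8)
The plan is to construct an explicit $\BL^\infty_\nu(Y) \aprod G$-equivariant isometric embedding of $\bigoplus_{n \in \N} \int_Y^\oplus \ell^2(\Gamma_y \backslash G)\, d\nu(y)$ into $\BL^2_\mu(X) \ominus \BL^2_\nu(Y)$, mimicking the classical Fourier-analytic description of the Koopman representation of a Bernoulli shift. First I would fix an orthonormal basis $\{e_n : n \geq 0\}$ of $\BL^2_\kappa(K)$ with $e_0 = \pone$, so that $\{e_n : n \geq 1\}$ is an orthonormal basis for $\BL^2_\kappa(K) \ominus \C$. For a function $w : G \to \N$ with finite support, write $e_w = \bigotimes_{t \in G} e_{w(t)} \in \BL^2_{\kappa^G}(K^G)$ (only finitely many factors differ from $\pone$, so this makes sense), and note that as $w$ ranges over finitely supported functions $G \to \N$ with $w \not\equiv 0$, the vectors $e_w$ form an orthonormal basis of $\BL^2_{\kappa^G}(K^G) \ominus \C$. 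I would extend this fiberwise over $Y$: on the fiber $K^{\Gamma_y \backslash G} \times \{y\}$, since points there are $\Gamma_y$-invariant, the relevant $w$ are those constant on left cosets $\Gamma_y t$, i.e. $w$ factors through a finitely supported function $\bar w : \Gamma_y \backslash G \to \N$, and $\{e_{\bar w} : \bar w \text{ finitely supported}, \bar w \not\equiv 0\}$ is an orthonormal basis for $\BL^2_{\kappa^{\Gamma_y \backslash G}}(K^G) \ominus \C$.

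Next I would stratify these basis vectors by the size of the support. For each $n \geq 1$ let $\HS_n \subseteq \BL^2_\mu(X)$ be the closed span of all $e_{\bar w} \otimes (\text{function of } y)$ where $\bar w : \Gamma_y \backslash G \to \N$ is supported on exactly $n$ cosets and takes values in $\{1, 2, \dots\}$. Then $\BL^2_\mu(X) \ominus \BL^2_\nu(Y) = \bigoplus_{n \geq 1} \HS_n$ as $\BL^\infty_\nu(Y) \aprod G$-modules, since the $G$-action permutes cosets (preserving the size of the support) and the $\BL^\infty_\nu(Y)$-action multiplies by functions of $y$ alone, hence each $\HS_n$ is invariant. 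It therefore suffices to show $\lambda$ embeds into $\HS_1$ and, more strongly, that $\lambda^{\oplus \N}$ embeds into each $\HS_n$ for at least countably many $n$ — but in fact it is cleanest to just show $\lambda$ embeds into $\HS_n$ for every $n \geq 1$ using a fixed choice of ``basepoint coset'' $\Gamma_y e$, and then $\lambda^{\oplus \N} = \bigoplus_{n} \lambda$ embeds into $\bigoplus_n \HS_n$. Concretely, for each fixed $m \geq 1$ define $V_m : \int_Y^\oplus \ell^2(\Gamma_y \backslash G)\, d\nu(y) \to \HS_1$ by sending the indicator $\delta_{\Gamma_y h}$ in the fiber over $y$ to $e_{\bar w} \otimes \delta_y$ where $\bar w$ is supported on the single coset $\Gamma_y h$ with value $m$ there; a direct computation using the inner product formula for the left-regular representation (the analogue of \eqref{eqn:leftreg}) and the tensor structure of $\kappa^{\Gamma_y \backslash G}$ shows $V_m$ is an isometry, and checking it intertwines the actions of $u_g$ (which shifts the coset $\Gamma_y h \mapsto g\Gamma_y h$ and moves the fiber) and of $s \in \BL^\infty_\nu(Y)$ (multiplication by $s(y)$) is the defining computation. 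Taking $V = \bigoplus_{m \geq 1} V_m$ on the $m$-th copy of $\lambda$ into the subspace of $\HS_1$ spanned by value-$m$ single-support vectors gives the embedding of $\lambda^{\oplus \N}$.

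The main obstacle — really the only subtle point — is bookkeeping the non-free, relative structure: one must verify that the map $y \mapsto \Gamma_y$ being $G$-equivariant is exactly what makes the coset-shift $\Gamma_y h \mapsto g \Gamma_y h$ compatible with the fiberwise action $f(\Gamma, y) \mapsto f(g^{-1}\Gamma g, g^{-1}y)$ appearing in the definition of $\lambda$, and that the infinite-index hypothesis guarantees the fibers $\ell^2(\Gamma_y \backslash G)$ are genuinely infinite-dimensional so the relevant basis vectors with support of size $n$ actually exist for every $n$. I expect the equivariance and isometry checks to be routine once the notation for $e_{\bar w}$ is set up carefully; the one place to be cautious is ensuring the images $V_m(\cdot)$ for distinct $m$, and the images of the various $\HS_n$, are genuinely orthogonal, which follows because distinct finitely-supported $\bar w$ (differing in support size or in values) give orthogonal $e_{\bar w}$.
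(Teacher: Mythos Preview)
Your $\HS_1$ argument via the maps $V_m$ is correct: with cyclic vector $\zeta_m(z,y)=e_m(z(1_G))$, the required inner-product identity holds because the single-coset condition $\Gamma_y g=\Gamma_y h \Leftrightarrow gh^{-1}\in\Gamma_y$ is automatic. However, $\bigoplus_{m\geq 1}V_m$ yields only $\lambda^{\oplus(d-1)}$ where $d=\dim\BL^2_\kappa(K)$. The lemma is stated for an arbitrary non-trivial probability space $(K,\kappa)$, so when $K=\{0,1\}$ you have $d=2$ and obtain a single copy of $\lambda$ inside $\HS_1$, not $\lambda^{\oplus\N}$.

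You therefore genuinely need the embedding of $\lambda$ into $\HS_n$ for infinitely many $n\geq 2$, and this is \emph{not} routine. A cyclic vector in $\HS_n$ must have the form $\zeta_n(z,y)=\prod_{r\in R^n_{\Gamma_y}}e_1(z(r))$ for some Borel assignment $\Gamma\mapsto R^n_\Gamma$ of an $n$-element subset of $\Gamma\backslash G$, and for the cyclic span to be isomorphic to $\lambda$ one needs, for all $g,h\in G$ and $\nu$-a.e.\ $y$,
\[
\Gamma_y\, g\, R^n_{g^{-1}\Gamma_y g} \;=\; \Gamma_y\, h\, R^n_{h^{-1}\Gamma_y h}
\quad\Longleftrightarrow\quad
gh^{-1}\in\Gamma_y.
\]
The forward implication fails for naive choices of $R^n_\Gamma$: for example, if $R^n_\Gamma$ lands on a coset of a nontrivial cyclic subgroup of $N_G(\Gamma)/\Gamma$, then $\Gamma g R^n_\Gamma=\Gamma R^n_\Gamma$ for some $g\notin\Gamma$. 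The infinite-index hypothesis guarantees that $n$-element subsets exist, but not that any Borel choice satisfies this separation property. The paper devotes the bulk of its proof to constructing such $R^n_\Gamma$, treating separately the cases $|N_G(\Gamma):\Gamma|=\infty$ (using sets of distinct prime cardinalities arranged to avoid being cyclic cosets) and $|N_G(\Gamma):\Gamma|<\infty$ (using an auxiliary aperiodic Borel automorphism of the conjugacy class). This combinatorial construction is exactly the missing idea in your proposal.
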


We believe that $\lambda^{\oplus \N}$ and $\rho|_{\BL^2_\mu(X) \ominus \BL^2_\nu(Y)}$ are isomorphic, though we have not checked this carefully. The above statement is all that we will need for our theorems.

\begin{proof}
Let $S$ denote the set of subgroups of $G$ having infinite index. Our first goal is to define a sequence of Borel functions $\Gamma \in S \mapsto R_\Gamma^n$, where $R_\Gamma^n \subseteq G$ is finite, satisfying the following condition for all $n, k \in \N$, $\Gamma \in S$, and $g \in G$:
\begin{enumerate}
\item [($*$)] if $g \not\in \Gamma$ or $n \neq k$ then $\Gamma R_\Gamma^n \neq \Gamma g R_{g^{-1} \Gamma g}^k$.
\end{enumerate}
We partition $S$ into the following two conjugation invariant Borel sets
\begin{align*}
S_1 & = \{\Gamma \in S : |N_G(\Gamma) : \Gamma| = \infty\}\\
S_2 & = \{\Gamma \in S : |N_G(\Gamma) : \Gamma| < \infty\},
\end{align*}
where $N_G(\Gamma) = \{g \in G : g \Gamma g^{-1} = \Gamma\}$ is the normalizer of $\Gamma$ in $G$. We will define the desired sequence of functions on each piece separately.

First consider $S_1$. Enumerate $G$ as $t_0, t_1, \ldots$ and for $I \subseteq \N$ set $T_I = \{t_i : i \in I\}$. Let $(\pi_n)_{n \in \N}$ enumerate the prime numbers greater than or equal to $3$ and pick bijections $q_n : \N \rightarrow \N^{\pi_n}$ for each $n \in \N$. Now for each $\Gamma \in S_1$ and $n \in \N$ set $R_\Gamma^n = T_{q_n(\ell)}$ where $\ell$ is least so that the following conditions hold:
\begin{enumerate}
\item [\rm (a)] $R_\Gamma^n \subseteq N_G(\Gamma)$;
\item [\rm (b)] $|\Gamma \backslash R_\Gamma^n| = \pi_n$;
\item [\rm (c)] if $g \in G$ and $\Gamma g R_\Gamma^n = \Gamma R_\Gamma^n$ then $g \in \Gamma$.
\end{enumerate}
By working in the group $N_G(\Gamma) / \Gamma$, (c) requires that $R_\Gamma^n \Gamma / \Gamma$ not be a union of cosets of a cyclic subgroup. However, (b) asserts that $R_\Gamma^n \Gamma / \Gamma$ has cardinality $\pi_n$, a prime strictly greater than $2$. So (c) is equivalent to the requirement that $R_\Gamma^n \Gamma / \Gamma$ not be a cyclic subgroup. Therefore it is easy to see that for every $\Gamma$ and every $n$ there is $\ell$ so that $R_\Gamma^n = T_{q_n(\ell)}$ has the desired properties. Furthermore the function $(\Gamma, n) \mapsto R_\Gamma^n$ is Borel.

Now we check ($*$) on $S_1$. Fix $\Gamma \in S_1$, $g \in G$, and $n, k \in \N$. First suppose $g \not\in N_G(\Gamma)$. Note that $g^{-1} \Gamma g R_{g^{-1} \Gamma g}^k \subseteq N_G(g^{-1} \Gamma g)$ since $g^{-1} \Gamma g \in S_1$. Therefore
$$\Gamma g R_{g^{-1} \Gamma g}^k \subseteq g N_G(g^{-1} \Gamma g) = N_G(\Gamma) g$$
is disjoint with $N_G(\Gamma) \supseteq \Gamma R_\Gamma^n$, so $\Gamma R_\Gamma^n \neq \Gamma g R_{g^{-1} \Gamma g}^k$ as required. Now suppose $g \in N_G(\Gamma)$. Then
$$\Gamma g R_{g^{-1} \Gamma g}^k = \Gamma g R_\Gamma^k$$
If $n = k$ and $g \not\in \Gamma$ then the above set is distinct from $\Gamma R_\Gamma^n$ by (c). If $n \neq k$ then the above set is distinct from $\Gamma R_\Gamma^k$ because after quotienting by $\Gamma$ they have distinct cardinalities by (b). We conclude ($*$) holds on $S_1$.

Now consider $S_2$. Notice that the conjugation action of $G$ on $S_2$ is aperiodic. So we can pick an aperiodic Borel bijection $T : S_2 \rightarrow S_2$ such that $T(\Gamma)$ is conjugate to $\Gamma$ for every $\Gamma \in S_2$ \cite[Lem. 3.25]{JKL}. Choose any Borel function $c : \Z \times S_2 \rightarrow G$ satisfying the rule $c(m, \Gamma)^{-1} \Gamma c(m, \Gamma) = T^m(\Gamma)$ and satisfying $c(0, \Gamma) = 1_G$. Now for $\Gamma \in S_2$ and $n \in \N$ set
$$R_\Gamma^n = \{c(i, \Gamma) : 0 \leq i \leq n\}.$$

We now verify ($*$) on $S_2$. Define $q : \bigcup_{\Gamma \in S_2} \Gamma \backslash G \rightarrow S_2$ by $q(\Gamma u) = u^{-1} \Gamma u$. Fix $\Gamma \in S_2$, $g \in G$, and $n, k \in \N$. Notice that
\begin{align*}
q(\Gamma R_\Gamma^n) & = \{q(\Gamma c(i, \Gamma)) : 0 \leq i \leq n\}\\
 & = \{ c(i, \Gamma)^{-1} \Gamma c(i, \Gamma) : 0 \leq i \leq n\} = \{T^i(\Gamma) : 0 \leq i \leq n\}
\end{align*}
and
$$q(\Gamma g R_{g^{-1} \Gamma g}^k) = q(g^{-1} \Gamma g R_{g^{-1} \Gamma g}^k) = \{T^i(g^{-1} \Gamma g) : 0 \leq i \leq k\}.$$
In particular, $|q(\Gamma R_\Gamma^n)| = n + 1$ and $|q(\Gamma g R_{g^{-1} \Gamma g}^k)| = k + 1$, so we are done if $n \neq k$. Additionally, if $g \not\in N_G(\Gamma)$ then $\Gamma \neq g^{-1} \Gamma g$ and therefore the partial $T$-orbits $\{T^i(\Gamma) : 0 \leq i \leq n\}$ and $\{T^i(g^{-1} \Gamma g) : 0 \leq i \leq k\}$ have different starting points and are unequal. Finally, suppose $g \in N_G(\Gamma)$ but $g \not\in \Gamma$. By definition of $c$ we have $1_G \in R_{g^{-1} \Gamma g}^k$ and hence
$$\Gamma g R_{g^{-1} \Gamma g}^k \cap (N_G(\Gamma) \setminus \Gamma) \supseteq \Gamma g \neq \varnothing.$$
On the other hand, $c(\Z \times \{\Gamma\}) \subseteq \{1_G\} \cup (G \setminus N_G(\Gamma))$ and hence
$$\Gamma R_\Gamma^n \cap (N_G(\Gamma) \setminus \Gamma) = \varnothing.$$
We conclude that ($*$) holds on $S_2$.

Now let us complete the proof. Notice that ($*$) implies the more general statement: for all $n, k \in \N$, $\Gamma \in S$, and $g, h \in G$:
\begin{enumerate}
\item [($*'$)] if $g h^{-1} \not\in \Gamma$ or $n \neq k$ then $\Gamma g R_{g^{-1} \Gamma g}^n \neq \Gamma h R_{h^{-1} \Gamma h}^k$.
\end{enumerate}
Indeed, setting $t = h^{-1} g$ and $\Gamma' = h^{-1} \Gamma h$, we have $g h^{-1} \in \Gamma \Leftrightarrow t \in \Gamma'$,
$$\Gamma h R_{h^{-1} \Gamma h}^k = h \cdot \Gamma' R_{\Gamma'}^k \qquad \text{and} \qquad \Gamma g R_{g^{-1} \Gamma g}^n = h \cdot \Gamma' t R_{t^{-1} \Gamma' t}^n.$$
So this follows immediately from ($*$).

For $y \in Y$ set $\mu_y = \kappa^{\Gamma_y \backslash G} \times \delta_y$. Fix any function $f \in \BL^2_\kappa(K) \ominus \C$ with $\|f\|_\kappa = 1$ and define the functions $\zeta_n \in \BL^2(X, \mu) \ominus \BL^2(Y, \nu)$ by
$$\zeta_n(z, y) = \prod \{f(z(r)) : r \in R_{\Gamma_y}^n\} \qquad \qquad (z, y) \in X = K^G \times Y$$
Notice that if $R, R' \subseteq G$ are finite, $\Gamma R \neq \Gamma R'$, and $\omega, \omega' : K^G \rightarrow \C$ are defined by
$$\omega_i(z) = \prod \{f(z(r)) : r \in R\} \qquad \omega'(z) = \prod \{f(z(r)) : r \in R'\},$$
then $\langle \omega, \omega' \rangle_{\kappa^{\Gamma \backslash G}} = 0$.

Since
\begin{align*}
(\rho(u_g) \zeta_n)(z, y) = \zeta_n(g^{-1} \cdot z, g^{-1} \cdot y) & = \prod \{f((g^{-1} \cdot z)(r)) : r \in R_{\Gamma_{g^{-1} \cdot y}}^n\}\\
 & = \prod \{f(z(r)) : r \in g R_{g^{-1} \Gamma_y g}^n\},
\end{align*}
property ($*'$) implies that for every $y \in Y$, $n, k \in \N$, and $g, h \in G$, if $n \neq k$ or $g h^{-1} \not\in \Gamma_y$ then
$$\langle \rho(u_g) \zeta_n, \rho(u_h) \zeta_k \rangle_{\mu_y} = 0.$$
On the other hand, if $n = k$ and $g h^{-1} \in \Gamma_y$ then $\rho(u_g) \zeta_n$ and $\rho(u_h) \zeta_n$ are equal $\mu_y$-almost-everywhere.

Since $\BL^\infty_\nu(Y)$ acts via scalar multiplication on each $Y$-fiber, it follows from the previous paragraph that the closed $\rho(\BL^\infty_\nu(Y) \aprod G)$-invariant subspaces generated by $\zeta_n$, $n \in \N$, are pairwise-orthogonal. It suffices to show that the restriction of $\rho$ to each of these subspaces is isomorphic to $\lambda$. Define $Y_g = \{y \in Y : g \in \Gamma_y\}$. For $s, t \in \BL^\infty_\nu(Y)$ and $g, h \in G$, we use the previous paragraph to compute
\begin{align*}
\langle \rho(s u_g) \zeta_n, \rho(t u_h) \zeta_n \rangle & = \int_Y s(y) \overline{t(y)} \langle \rho(u_g) \zeta_n, \rho(u_h) \zeta_k \rangle_{\mu_y} \ d \nu(y)\\
 & = \int_{Y_{g h^{-1}}} s(y) \overline{t(y)} \ d \nu(y).
\end{align*}
This is identical to (\ref{eqn:leftreg}), thus the map $\lambda(s u_g) \xi \mapsto \rho(s u_g) \zeta_n$ extends by linearity and continuity to a $\BL^\infty_\nu(Y) \aprod G$-equivariant linear isometric embedding.
\end{proof}

\begin{cor} \label{cor:leftreg}
Let $G \acts (Y, \nu)$ be a {\pmp} action and $\omega$ be a $G$-invariant Borel probability measure on $\Sub(G) \times Y$ with $\Gamma$ having infinite index in $G$ for $\omega$-almost-every $(\Gamma, y)$ and with $\omega$ pushing forward to $\nu$ under the projection map. Let
$$\lambda : \BL^\infty_\nu(Y) \aprod G \rightarrow \cB \left( \int_{\Sub(G) \times Y}^\oplus \ell^2(\Gamma \backslash G) \ d \omega(\Gamma, y) \right)$$
be the generalized left-regular $*$-representation. Fix a non-trivial probability space $(K, \kappa)$ and consider the action $G \acts (X, \mu)$ where
$$(X, \mu) = \left(K^G \times Y, \int_{\Sub(G) \times Y} \kappa^{\Gamma \backslash G} \times \delta_y \ d \omega(\Gamma, y) \right).$$
Let $\rho : \BL^\infty_\nu(Y) \aprod G \rightarrow \cB(\BL^2_\mu(X))$ be the generalized Koopman $*$-representation. Then $\lambda^{\oplus \N}$ embeds into $\rho|_{\BL^2_\mu(X) \ominus \BL^2_\nu(Y)}$.
\end{cor}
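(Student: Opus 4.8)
The plan is to deduce the corollary from Lemma~\ref{lem:leftreg} by absorbing the subgroup coordinate into the base space. Set $(Y', \nu') = (\Sub(G) \times Y, \omega)$, equipped with the diagonal $G$-action (conjugation on the $\Sub(G)$ factor and the given action on $Y$), and let $p : Y' \to \Sub(G)$, $p(\Gamma, y) = \Gamma$, be the first-coordinate projection. After deleting the $\omega$-null $G$-invariant set of pairs $(\Gamma, y)$ with $|G : \Gamma| < \infty$, the map $p$ is a $G$-equivariant Borel map all of whose values are infinite-index subgroups, so Lemma~\ref{lem:leftreg} applies to the data $(Y', \nu')$, $p$, and $(K, \kappa)$. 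It yields an embedding of $\lambda'^{\oplus \N}$ into $\rho'|_{\BL^2_{\mu'}(X') \ominus \BL^2_{\nu'}(Y')}$, where $\lambda'$, $\rho'$ and $(X', \mu') = (K^G \times Y', \int_{Y'} \kappa^{p(y') \backslash G} \times \delta_{y'}\, d\nu'(y'))$ are the generalized left-regular representation, generalized Koopman representation, and probability space attached to this new base.

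The content is then to identify these primed objects, after restricting the acting $*$-algebra along the natural inclusion $\BL^\infty_\nu(Y) \aprod G \hookrightarrow \BL^\infty_{\nu'}(Y') \aprod G$ (pull functions on $Y$ back to $\Sub(G) \times Y$; this is $G$-equivariant because the projection $Y' \to Y$ is), with the objects in the statement of the corollary. Inspecting the defining formulas: the Hilbert space of $\lambda'$ is literally $\int_{\Sub(G) \times Y}^\oplus \ell^2(\Gamma \backslash G)\, d\omega(\Gamma, y)$, and since $p$ is $G$-equivariant one checks $\lambda'(s u_g) = \lambda(s u_g)$ for every $s \in \BL^\infty_\nu(Y)$, so $\lambda'$ restricts to the $\lambda$ of the corollary. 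For the Koopman side, note that because each $\Gamma$ has infinite index and $\kappa$ is non-trivial, $\Gamma = \Stab(z)$ for $\kappa^{\Gamma \backslash G}$-almost-every $z \in K^G$; hence the $G$-equivariant forgetful map $(z, \Gamma, y) \mapsto (z, y)$ is a measure isomorphism of $(X', \mu')$ onto $(X, \mu)$, with inverse $(z, y) \mapsto (z, \Stab(z), y)$ (using that $z \mapsto \Stab(z)$ is Borel and $\Stab(g \cdot z) = g\, \Stab(z)\, g^{-1}$). Under the induced unitary $\BL^2_{\mu'}(X') \cong \BL^2_\mu(X)$, the representation $\rho'$ restricted to $\BL^\infty_\nu(Y) \aprod G$ becomes the $\rho$ of the corollary, and the subspace $\BL^2_{\nu'}(Y') \subseteq \BL^2_{\mu'}(X')$ becomes the closed $\rho(\BL^\infty_\nu(Y) \aprod G)$-invariant subspace $V \subseteq \BL^2_\mu(X)$ consisting of (classes of) functions of the form $(z, y) \mapsto \psi(\Stab(z), y)$, which visibly contains $\BL^2_\nu(Y)$.

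Combining: restricting the embedding supplied by Lemma~\ref{lem:leftreg} to the subalgebra $\BL^\infty_\nu(Y) \aprod G$ and transporting it through the identifications above gives a $\BL^\infty_\nu(Y) \aprod G$-equivariant isometric embedding of $\lambda^{\oplus \N}$ into $\rho|_{\BL^2_\mu(X) \ominus V}$. Since $\BL^2_\nu(Y) \subseteq V$ we have $\BL^2_\mu(X) \ominus V \subseteq \BL^2_\mu(X) \ominus \BL^2_\nu(Y)$, and the latter space is $\rho(\BL^\infty_\nu(Y) \aprod G)$-invariant, so $\rho|_{\BL^2_\mu(X) \ominus V}$ is a subrepresentation of $\rho|_{\BL^2_\mu(X) \ominus \BL^2_\nu(Y)}$. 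Hence $\lambda^{\oplus \N}$ embeds into $\rho|_{\BL^2_\mu(X) \ominus \BL^2_\nu(Y)}$, as required.

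I expect the only real obstacle to be bookkeeping: checking from the definitions that the generalized left-regular and Koopman representations behave functorially under (i) restriction of the $*$-algebra along $\BL^\infty_\nu(Y) \aprod G \hookrightarrow \BL^\infty_{\nu'}(Y') \aprod G$ and (ii) the $G$-space isomorphism $(X', \mu') \cong (X, \mu)$, and in particular correctly pinning down the image $V$ of $\BL^2_{\nu'}(Y')$ inside $\BL^2_\mu(X)$ and confirming $\BL^2_\nu(Y) \subseteq V$. There should be no analytic or combinatorial difficulty beyond what is already in Lemma~\ref{lem:leftreg}; all the work is in setting up the change of base.
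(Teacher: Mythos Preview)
Your argument is correct and follows essentially the same route as the paper: enlarge the base to $Y' = \Sub(G)\times Y$ with measure $\omega$ (which the paper writes as $(\Stab\times\pi)_*(\mu)$), apply Lemma~\ref{lem:leftreg} with $\Gamma_{y'}$ the first coordinate, and then restrict along $\BL^\infty_\nu(Y)\aprod G \hookrightarrow \BL^\infty_{\nu'}(Y')\aprod G$ while using the stabilizer map to identify $(X',\mu')\cong(X,\mu)$. Your explicit identification of the image $V$ of $\BL^2_{\nu'}(Y')$ and the observation $\BL^2_\nu(Y)\subseteq V$ is exactly what underlies the paper's one-line claim that $\rho'|_{\BL^2_{\mu'}(X')\ominus\BL^2_{\nu'}(Y')}$ embeds into $\rho|_{\BL^2_\mu(X)\ominus\BL^2_\nu(Y)}$.
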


\begin{proof}
Let $\pi : X \rightarrow Y$ be the projection map, set $Y' = \Sub(G) \times Y$, and set $\nu' = (\Stab \times \pi)_*(\mu)$. Notice that $G \acts (Y', \nu')$ is an intermediary factor between $(X, \mu)$ and $(Y, \nu)$. Also notice that $G \acts (X, \mu)$ is isomorphic to $G \acts (X', \mu')$ where
$$(X', \mu') = \left(K^G \times Y', \int_{(\Gamma, y) \in Y'} \kappa^{\Gamma \backslash G} \times \delta_{\Gamma} \times \delta_y \ d \nu'(\Gamma, y) \right)$$
(these are isomorphic since the stabilizer map on $K^G$ is $\kappa^{\Gamma \backslash G}$-almost-every equal to $\Gamma$, whenever $\Gamma$ has infinite index in $G$). For $y' = (\Gamma, y)$ set $\Gamma_{y'} = \Gamma$. Let
$$\lambda' : \BL^\infty_{\nu'}(Y') \aprod G \rightarrow \cB \left( \int_{Y'}^\oplus \ell^2(\Gamma_{y'} \backslash G) \ d \nu'(y') \right)$$
be the generalized left-regular $*$-representation, and let $\rho' : \BL^\infty_{\nu'}(Y') \aprod G \rightarrow \cB(\BL^2_{\mu'}(X'))$ be the generalized Koopman $*$-representation. Clearly if we restrict $\lambda'$ and $\rho'$ to be $*$-representations of $\BL^\infty_{\nu}(Y) \aprod G$ then $\lambda$ is isomorphic to $\lambda'$ and $\rho'|_{\BL^2_{\mu'}(X') \ominus \BL^2_{\nu'}(Y')}$ embeds into $\rho|_{\BL^2_\mu(X) \ominus \BL^2_\nu(Y)}$. Thus we are finished by the previous lemma.
\end{proof}

\begin{cor} \label{cor:sinaiembed}
Let $G \acts (X, \mu)$ be an aperiodic {\pmp} action, let $\Sigma$ be a $G$-invariant sub-$\sigma$-algebra, and let $G \acts (Y, \nu)$ be the factor associated with $\Sigma$, say via $\phi : (X, \mu) \rightarrow (Y, \nu)$. Set $\omega = (\Stab \times \phi)_*(\mu)$ and let
$$\lambda : \BL^\infty_\nu(Y) \aprod G \rightarrow \cB \left( \int_{\Sub(G) \times Y}^\oplus \ell^2(\Gamma \backslash G) \ d \omega(\Gamma, y) \right)$$
and $\rho : \BL^\infty_\nu(Y) \aprod G \rightarrow \cB(\BL^2(X, \mu))$ be the generalized left-regular and Koopman $*$-representations, respectively. If $\rh_G(X, \mu \given \Sigma) > 0$ then $\lambda^{\oplus \N}$ embeds into $\rho|_{\BL^2_\mu(X) \ominus \BL^2_\nu(Y)}$.
\end{cor}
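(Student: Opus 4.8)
The plan is to reduce to Corollary~\ref{cor:leftreg}: I would first carve out, between $\Sigma$ and $\Borel(X)$, a $G$-invariant sub-$\sigma$-algebra $\Lambda$ whose associated factor is a non-free Bernoulli extension of $(Y,\nu)$ of exactly the form appearing there, and then transport the conclusion of that corollary back up to $\BL^2_\mu(X)$.

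The first step is to exploit positivity of the relative Rokhlin entropy: since $\rh_G(X,\mu \given \Sigma) > 0$, fix a non-trivial (say two-point) probability space $(K,\kappa)$ with $0 < \sH(K,\kappa) \leq \rh_G(X,\mu \given \Sigma)$. Now I would invoke the relative, non-free generalization of Theorem~\ref{intro:sinai} supplied by the methods of \cite{S18} (the free non-relative case being literally that theorem): it produces a $G$-equivariant factor map from $(X,\mu)$ onto $\big(K^G \times Y, \int_{\Sub(G) \times Y} \kappa^{\Gamma \backslash G} \times \delta_y\ d\omega(\Gamma,y)\big)$ that intertwines $\phi$ with the projection to $Y$, where $\omega = (\Stab \times \phi)_*(\mu)$. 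It is important that the target Bernoulli extension is built from the stabilizer data of $(X,\mu)$ itself: because $\Stab_G(x) \subseteq \Stab_G(\phi(x))$ for every $x$, the stabilizer of the image point $(z,y)$ (for a $\kappa^{\Gamma\backslash G}$-generic $z$) equals $\Gamma \cap \Stab_G(y) = \Gamma = \Stab_G(x)$, so the factor map preserves point stabilizers and $\omega$ is precisely the measure named in the statement. Let $\Lambda \subseteq \Borel(X)$ be the $G$-invariant sub-$\sigma$-algebra pulled back along this factor map; since the map lies over $\phi$ we have $\Sigma \subseteq \Lambda$, and the factor $G \acts (X_\Lambda,\mu_\Lambda)$ associated with $\Lambda$ is isomorphic to the displayed non-free Bernoulli extension of $(Y,\nu)$.

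Next I would identify $\BL^2_{\mu_\Lambda}(X_\Lambda)$ with the space of $\Lambda$-measurable functions inside $\BL^2_\mu(X)$. Since $\Lambda$ is $G$-invariant and $\Sigma \subseteq \Lambda$, this subspace is invariant under every operator $\rho(s u_g)$ with $s \in \BL^\infty_\nu(Y)$ and $g \in G$, it contains $\BL^2_\nu(Y)$, and the restriction of $\rho$ to it is exactly the generalized Koopman $*$-representation $\rho_\Lambda$ of the factor $X_\Lambda \to Y$. Corollary~\ref{cor:leftreg}, applied to $G \acts (Y,\nu)$, to $\omega$ (which is $G$-invariant, pushes forward to $\nu$, and is supported on infinite-index subgroups because the action is aperiodic), and to $(K,\kappa)$, then shows that $\lambda^{\oplus\N}$ embeds into $\rho_\Lambda|_{\BL^2_{\mu_\Lambda}(X_\Lambda) \ominus \BL^2_\nu(Y)}$, with the $\lambda$ produced there being exactly the generalized left-regular $*$-representation on $\int_{\Sub(G) \times Y}^\oplus \ell^2(\Gamma \backslash G)\ d\omega(\Gamma,y)$ named in the statement. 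Composing this embedding with the inclusion $\BL^2_{\mu_\Lambda}(X_\Lambda) \ominus \BL^2_\nu(Y) \hookrightarrow \BL^2_\mu(X) \ominus \BL^2_\nu(Y)$ of $\BL^\infty_\nu(Y) \aprod G$-representations completes the argument.

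I expect the only genuinely substantive point to be the second step: upgrading Theorem~\ref{intro:sinai} to hold relative to $\Sigma$ and for non-free actions, and in particular checking that the Bernoulli factor it produces is attached to $\omega = (\Stab \times \phi)_*(\mu)$ rather than to some coarser stabilizer profile $\omega'$ (with $\Gamma' \supseteq \Gamma$). The latter would only give an embedding of a proper subrepresentation of $\lambda$ — indeed $\ell^2(\Gamma'\backslash G)$ is the smaller space — so matching the stabilizer data exactly is what makes the conclusion come out in the form stated. Everything downstream of that, namely the application of Corollary~\ref{cor:leftreg} and the bookkeeping with $\BL^\infty_\nu(Y) \aprod G$-equivariant inclusions of $\BL^2$-spaces of intermediate factors, is routine.
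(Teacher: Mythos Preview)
Your proposal is correct and follows essentially the same route as the paper: invoke the relative, non-free Sinai factor theorem from \cite{S18} to produce a factor map $f : X \to K^G$ with $(f \times \phi)_*(\mu) = \int_{\Sub(G) \times Y} \kappa^{\Gamma \backslash G} \times \delta_y \, d\omega(\Gamma, y)$, and then apply Corollary~\ref{cor:leftreg}. The paper's proof is just a terser version of what you wrote; in particular, the factor theorem in \cite{S18} is already stated in the form needed (with the stabilizer data $\omega = (\Stab \times \phi)_*(\mu)$ built in), so the ``substantive point'' you flag is precisely the black box being cited.
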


\begin{proof}
Since $\rh_G(X, \mu \given \Sigma) > 0$, our factor theorem from the previous paper \cite{S18} (of which the previously stated Theorem \ref{intro:sinai} was just a special case) implies that there is a non-trivial probability space $(K, \kappa)$ and a factor map $f$ from $G \acts X$ to $K^G$ such that
$$(f \times \phi)_*(\mu) = \int_{\Sub(G) \times Y} \kappa^{\Gamma \backslash G} \times \delta_y \ d \omega(\Gamma, y).$$
The claim now follows from the previous corollary.
\end{proof}

\section{Spectral consequences of positive entropy}

In this section we prove our main theorem. This generalizes to Rokhlin entropy a similar result obtained by Ben Hayes for sofic entropy \cite{Ha}. However, our proof shares no similarity with the Hayes' sofic entropy proof, as his methods are entirely rooted in soficity.

Given a $*$-algebra $A$ and two $*$-representations $\pi_i : A \rightarrow \cB(\HS_i)$, $i = 1, 2$, we say that $\pi_1$ and $\pi_2$ are \emph{singular} if no non-trivial subrepresentation of $\pi_1$ is (isometrically) embeddable into $\pi_2$. Also, for a sub-$\sigma$-algebra $\cF$ of $(X, \mu)$, we write $\BL^2_\mu(\cF)$ for the set of $\cF$-measurable functions in $\BL^2_\mu(X)$, and conversely for $\HS \subseteq \BL^2_\mu(X)$ we write $\salg_G(\HS)$ for the smallest $G$-invariant $\sigma$-algebra containing $\psi^{-1}(D)$ for $\psi \in \HS$, $D \subseteq \C$ Borel.

\begin{thm} \label{thm:koop}
Let $G \acts (X, \mu)$ be an aperiodic {\pmp} action, and let $G \acts (Y, \nu)$ be a factor action, say via $\phi : (X, \mu) \rightarrow (Y, \nu)$. Set $\omega = (\Stab \times \phi)_*(\mu)$ and $\Sigma = \phi^{-1}(\Borel(Y))$. Let $\rho : \BL^\infty_\nu(Y) \aprod G \rightarrow \BL^2_\mu(X)$ and
$$\lambda : \BL^\infty_\nu(Y) \aprod G \rightarrow \cB \left( \int_{\Sub(G) \times Y}^{\oplus} \ell^2(\Gamma \backslash G) \ d \omega(\Gamma, y) \right)$$
be the generalized Koopman and left-regular representations, respectively. If $\HS$ is a $\rho(\BL^\infty_\nu(Y) \aprod G)$-invariant closed subspace of $\BL^2_\mu(X)$ and $\rho|_{\HS}$ is singular with respect to $\lambda$ then $\rh_G(\salg_G(\HS) \given \Sigma) = 0$.
\end{thm}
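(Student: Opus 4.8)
The plan is to argue by contradiction: suppose $\rh_G(\salg_G(\HS) \given \Sigma) > 0$, and derive that some non-trivial subrepresentation of $\rho|_\HS$ embeds into $\lambda$, contradicting singularity. The natural tool is Corollary \ref{cor:sinaiembed}, but that corollary is about $\rh_G(X, \mu \given \Sigma)$ for the whole space, whereas here we only have positive outer Rokhlin entropy of the $\sigma$-algebra $\salg_G(\HS)$. So the first step is to pass to an intermediate factor: let $\Lambda = \salg_G(\HS) \vee \Sigma$ and let $G \acts (Z, \eta)$ be the factor associated with $\Lambda$, sitting between $(X,\mu)$ and $(Y,\nu)$ via maps $\psi : (X,\mu) \to (Z,\eta)$ and $\phi = \theta \circ \psi$ with $\theta : (Z,\eta) \to (Y,\nu)$. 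Since $\salg_G(\HS \given \Sigma)$ generates $\Lambda$ over $\Sigma$, we have $\rh_G(Z, \eta \given \Sigma) = \rh_G(\salg_G(\HS) \given \Sigma) > 0$ (using that Rokhlin entropy of a $\sigma$-algebra equals the Rokhlin entropy of the corresponding factor relative to the common sub-$\sigma$-algebra). Here one must be slightly careful with the subtlety that $\Sigma \subseteq \Lambda$ so everything is relative to the same base.

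Next I would apply Corollary \ref{cor:sinaiembed} to the action $G \acts (Z, \eta)$ with its $G$-invariant sub-$\sigma$-algebra $\theta^{-1}(\Borel(Y))$: since $\rh_G(Z, \eta \given \theta^{-1}(\Borel(Y))) > 0$, we conclude that $\lambda^{\oplus \N}$ — equivalently a single copy of $\lambda$ — embeds into $\rho_Z|_{\BL^2_\eta(Z) \ominus \BL^2_\nu(Y)}$, where $\rho_Z$ is the generalized Koopman representation of $\BL^\infty_\nu(Y) \aprod G$ acting on $\BL^2_\eta(Z)$. (One checks that $(\Stab \times \theta)_*(\eta) = \omega$, so the same $\lambda$ appears; this uses that $\psi$ does not change stabilizers up to null sets, which follows since $\Sigma \subseteq \Lambda$ and aperiodicity.) Now $\BL^2_\eta(Z)$ embeds $G$-equivariantly and $\BL^\infty_\nu(Y) \aprod G$-equivariantly into $\BL^2_\mu(X)$ via $\psi$, with image contained in $\BL^2_\mu(\Lambda)$. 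So we obtain an embedding of $\lambda$ into $\rho|_{\BL^2_\mu(\Lambda) \ominus \BL^2_\nu(Y)}$.

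The remaining step — and the one I expect to be the genuine obstacle — is to show that this forces a non-trivial subrepresentation of $\rho|_\HS$ itself (not merely of $\rho|_{\BL^2_\mu(\Lambda) \ominus \BL^2_\nu(Y)}$) to embed into $\lambda$. The point is that $\BL^2_\mu(\Lambda) \ominus \BL^2_\nu(Y)$ can be strictly larger than $\HS$, since $\Lambda = \salg_G(\HS) \vee \Sigma$ is generated by $\HS$ as a $\sigma$-algebra, which typically produces many more $\BL^2$ functions (products, indicator functions of preimages, etc.) than the closed linear span of $\HS$. To handle this I would argue that $\rho|_{\BL^2_\mu(\Lambda) \ominus \BL^2_\nu(Y)}$ is, up to isomorphism, contained in a direct sum of tensor-product-type pieces built from $\rho|_\HS$ and $\rho|_{\BL^2_\nu(Y)}$ — more precisely, $\BL^2_\mu(\Lambda)$ is spanned by functions of the form $(s \circ \phi) \cdot (\psi_1 \cdots \psi_k)$ with $s \in \BL^\infty_\nu(Y)$ and $\psi_i \in \HS$ bounded — so that if $\rho|_\HS$ were singular with $\lambda$, then every such piece would also be singular with $\lambda$ (using that $\lambda$ absorbs tensoring appropriately, or rather that a representation disjoint from $\lambda$ stays disjoint after the relevant operations, via the characterization of $\lambda$-embeddability in terms of the matrix coefficients decaying like those in \eqref{eqn:leftreg}). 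Then $\rho|_{\BL^2_\mu(\Lambda) \ominus \BL^2_\nu(Y)}$ would be singular with $\lambda$, contradicting the embedding of $\lambda$ produced above. Making the "tensor pieces inherit singularity" claim precise is the crux; I would expect the author to isolate this as a separate lemma about when $\rho|_{\salg_G(\HS)}$ is singular with $\lambda$ given that $\rho|_\HS$ is, possibly exploiting that $\lambda$ is weakly contained in / absorbed by tensoring with any representation in a way that lets singularity propagate, together with the fact that $\salg_G(\HS)$-measurable $\BL^2$ functions can be uniformly approximated by polynomial expressions in boundedly-many translates of finitely many elements of $\HS$.
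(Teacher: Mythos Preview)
Your approach diverges substantially from the paper's, and the step you flag as ``the crux'' is a genuine gap that cannot be filled: the claim that if $\rho|_\HS$ is singular with $\lambda$ then $\rho|_{\BL^2_\mu(\Lambda) \ominus \BL^2_\nu(Y)}$ is also singular with $\lambda$ is false in general. The $\sigma$-algebra $\Lambda = \salg_G(\HS) \vee \Sigma$ is generated from $\HS$ by nonlinear operations (products, indicator functions), which at the representation level correspond roughly to tensor powers; but singularity with $\lambda$ does \emph{not} pass to tensor powers. Already for $G = \Z$: a unitary representation is singular with $\lambda$ iff its spectral measure on $\mathbb{T}$ is Lebesgue-singular, the spectral measure of a tensor product is the convolution, and there exist singular probability measures on $\mathbb{T}$ whose self-convolution is absolutely continuous. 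So the hoped-for lemma does not exist. There is also a secondary gap: your claim that $(\Stab \times \theta)_*(\eta) = \omega$ requires $\psi : (X,\mu) \to (Z,\eta)$ to be class-bijective, which does not follow from $\Sigma \subseteq \Lambda$ and aperiodicity.

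The paper's proof does not route through Corollary~\ref{cor:sinaiembed} at all. Instead it uses the ordering/``external past'' machinery from \cite{S18} (Lemmas~\ref{lem:slice}--\ref{lem:sent}) together with a tailored Lemma~\ref{lem:convex}. From positive entropy one extracts a finite partition $\alpha$ (built as $\psi^{-1}(\beta)$ for some $\psi \in \HS$ and convex partition $\beta$ of $\C$) with $\alpha \subseteq \cA$ but $\alpha \not\subseteq \cA^-$, where $\cA^- \subseteq \cA$ are the past $\sigma$-algebras for $\alpha$. The convexity property of $\alpha$ then forces $(\Exp_{\cA} - \Exp_{\cA^-})(\HS) \neq \{0\}$; any nonzero $\zeta$ in this projection has, by the ordering structure (Lemma~\ref{lem:order}), the fiberwise orthogonality $\langle \rho(u_g)\zeta, \rho(u_h)\zeta \rangle_{\mu_{(\Gamma,y)}} = 0$ whenever $gh^{-1} \notin \Gamma$. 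This matches the inner-product formula~\eqref{eqn:leftreg} for $\lambda$, giving an equivariant isometry from the cyclic subspace $V$ generated by $\zeta$ into the space of $\lambda$. Since $\zeta$ is obtained as a projection of vectors of $\HS$, the projection $\HS \to V$ is nonzero, and polar decomposition of the composite yields a nontrivial isometric embedding of a subrepresentation of $\rho|_\HS$ into $\lambda$. The essential point is that the construction never leaves the linear span of (projections of) $\HS$, so the link back to $\HS$ is immediate --- precisely the link your black-box use of Corollary~\ref{cor:sinaiembed} cannot supply.
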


The proof of this theorem will rely upon some of the methods developed in our previous paper \cite{S18}, which we now review. Recall that for a {\pmp} action $G \acts (X, \mu)$ with associated orbit-equivalence relation $E_G^X = \{(x, y) : \exists g \in G \ g \cdot x = y\}$, the associated \emph{full group}, denoted $[E_G^X]$, is the group of all Borel bijections $T : X \rightarrow X$ (identified up to equality $\mu$-almost-everywhere) satisfying $T(x) \ E_G^X \ x$ for $\mu$-almost-every $x \in X$. If $T \in [E_G^X]$ and $\cF$ is a $G$-invariant sub-$\sigma$-algebra, we say that $T$ is \emph{$\cF$-expressible} if there is a $\cF$-measurable partition of $X$, $\{Z_g : g \in G\}$, satisfying $T(x) = g \cdot x$ for every $g \in G$ and $\mu$-almost-every $x \in Z_g$.

Recall that for a {\pmp} action $G \acts (X, \mu)$ and a $G$-invariant sub-$\sigma$-algebra $\cF$, there exists a unique (up to isomorphism) factor action $G \acts (Y, \nu)$, say via $\phi : (X, \mu) \rightarrow (Y, \nu)$, satisfying $\phi^{-1}(\Borel(Y)) = \cF$. The sub-$\sigma$-algebra $\cF$ is called \emph{class-bijective} if $\Stab(\phi(x)) = \Stab(x)$ for $\mu$-almost-every $x \in X$.

\begin{lem}[Seward, \cite{S18}] \label{lem:slice}
Let $G \acts (X, \mu)$ be an aperiodic {\pmp} action, and let $\cF$ be a $G$-invariant class-bijective sub-$\sigma$-algebra. Then there is an aperiodic $\cF$-expressible $T \in [E_G^X]$ and a $T$-invariant $\cF$-measurable function $f : X \rightarrow [0, 1]$ with the property that $x, y \in X$ lie in the same $G$-orbit and have equal $f$ values if and only if they lie in the same $T$-orbit.
\end{lem}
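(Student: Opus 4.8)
The plan is to push the entire construction down to the factor $(Y,\nu)$ associated with $\cF$, carry it out there, and then lift it back to $X$ using class-bijectivity.

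First I would set up the reduction. Let $\phi : (X, \mu) \to (Y, \nu)$ be the factor map with $\phi^{-1}(\Borel(Y)) = \cF$. Since $\cF$ is class-bijective we have $\Stab(x) = \Stab(\phi(x))$ for a.e.\ $x$, so $\phi$ restricts to a bijection of each $G$-orbit of $X$ onto the corresponding $G$-orbit of $Y$; in particular the action $G \acts (Y, \nu)$ is again aperiodic. Under this orbit-wise identification, $\cF$-expressible elements of $[E_G^X]$ correspond precisely to arbitrary elements of $[E_G^Y]$ (the $\cF$-measurable defining partition on $X$ is exactly the $\phi$-preimage of a Borel partition $\{W_g\}_{g\in G}$ of $Y$, and conversely any $T_Y \in [E_G^Y]$ can be cut into Borel pieces on which it agrees with the action of some $g\in G$, which then pull back), and $\cF$-measurable functions on $X$ correspond to Borel functions on $Y$. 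So it is enough to produce an aperiodic $T_Y \in [E_G^Y]$ together with a $T_Y$-invariant Borel function $h : Y \to [0,1]$ such that two points of $Y$ lying in a common $G$-orbit have equal $h$-value if and only if they lie in a common $T_Y$-orbit; then $f := h \circ \phi$ and the $\cF$-expressible lift $T$ of $T_Y$ do the job, every asserted property being read off orbit by orbit through the bijection $\phi$.

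Next I would reformulate the problem on $Y$. Such a pair $(T_Y, h)$ is the same data as a single Borel function $h : Y \to [0,1]$ whose fibre relation inside orbits, $F := \{(y, y') : (y,y') \in E_G^Y \text{ and } h(y) = h(y')\}$, is hyperfinite with all classes infinite. Indeed, given such an $h$: the relation $F$ is the orbit relation of an aperiodic $\nu$-preserving Borel automorphism (a standard fact --- an aperiodic $\nu$-preserving hyperfinite Borel equivalence relation is the orbit relation of a single such automorphism), this $T_Y$ belongs to $[E_G^Y]$, $h$ is $T_Y$-invariant because it is constant on $F$-classes, and $h$ separates the $T_Y$-orbits sitting inside each $G$-orbit by the very definition of $F$. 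So the real content is to extract from the aperiodicity of $G \acts (Y,\nu)$ a Borel $h$ with this property. I would build it by a marker-lemma / ``toast'' argument of the kind underlying \cite{S18}: take a nested sequence of Borel complete sections with empty intersection, assemble from it a hyperfinite subequivalence relation $F \subseteq E_G^Y$ all of whose classes are infinite, arranged so that the (countably many) $F$-classes inside each fixed $G$-orbit can be enumerated in a Borel way, and let $h$ record that enumeration.

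The hard part will be precisely this construction, and the difficulty is structural rather than bookkeeping. Because $F$ is $\nu$-preserving and aperiodic it is necessarily non-smooth, so $h$ can never separate $F$-classes globally --- only within each individual $G$-orbit --- while remaining Borel; in particular $F$ cannot be chosen ergodic in the interesting case (if $E_G^Y$ is not already hyperfinite, then $F$ is a proper subrelation, and were it ergodic the $F$-invariant function $h$ would be essentially constant, hence constant along a.e.\ full $G$-orbit). One must therefore arrange that the $\sigma$-algebra of $F$-invariant Borel sets is rich enough to tell apart the $F$-classes inside one $G$-orbit; a toast-type construction that carries persistent block labels from one scale to the next is the natural device, and verifying that it is Borel with infinite classes is where the genuine work sits. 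Once $h$ (equivalently $T_Y$) is in hand, everything remaining is routine transport through $\phi$: one checks that the lift $T$ is an aperiodic bijection in $[E_G^X]$, that it is $\cF$-expressible via the partition $\{\phi^{-1}(W_g)\}$, that $f = h\circ\phi$ is $\cF$-measurable and $T$-invariant, and that ``same $G$-orbit and equal $f$-value $\iff$ same $T$-orbit'', all one $G$-orbit at a time using class-bijectivity.
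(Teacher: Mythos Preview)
The present paper does not prove this lemma; it is quoted from \cite{S18} with no argument supplied here, so there is no in-paper proof to compare your proposal against. Assessing the proposal on its own terms: your reduction to the factor $(Y,\nu)$ via class-bijectivity is correct and routine --- $\cF$-expressible elements of $[E_G^X]$ are exactly lifts of elements of $[E_G^Y]$, and $\cF$-measurable functions are pullbacks from $Y$, so the problem does reduce to the case of the full Borel $\sigma$-algebra on an aperiodic $(Y,\nu)$. Your reformulation on $Y$ is also right: a pair $(T_Y,h)$ as in the statement is the same thing as a Borel $h:Y\to[0,1]$ whose level-set subrelation $F\subseteq E_G^Y$ is aperiodic hyperfinite, and you have correctly isolated the structural obstruction (namely that $F$ is non-smooth, and non-ergodic whenever $E_G^Y$ is not itself hyperfinite, so an arbitrary hyperfinite subrelation will not admit a Borel within-orbit labeling of its classes after the fact --- one must build $F$ and $h$ together).

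The gap is that you do not carry out the construction. You name the toolbox (``marker lemma / toast'', ``persistent block labels'') and state where the difficulty lies, but you do not exhibit $h$, nor verify that the limit object is Borel with infinite $F$-classes and that $h$ genuinely separates those classes inside every $G$-orbit. Since, as you yourself say, this is ``where the genuine work sits'', what you have written is a correct strategy sketch rather than a proof. To turn it into one you would need to make the marker construction explicit and check the three properties (aperiodicity of $F$, hyperfiniteness of $F$, and the separation property of $h$) directly.
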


An important consequence of the above lemma is that it creates an ordering on each $G$-orbit, and with it a useful notion of ``past.'' Specifically, for $x \in X$ we define a quasi-order on $G$ by setting $u \leq_x v$ if either $f(u^{-1} \cdot x) < f(v^{-1} \cdot x)$ or $f(u^{-1} \cdot x) = f(v^{-1} \cdot x)$ and there is $m \geq 0$ with $T^{-m}(u^{-1} \cdot x) = v^{-1} \cdot x$. When $x$ has trivial stabilizer $\leq_x$ is a total order on $G$. We write $u <_x v$ when $u \leq_x v$ but $\neg (v \leq_x u)$. Its a simple consequence of the $\cF$-expressibility of $T$ and the $\cF$-measurability of $f$ that for every $u, v \in G$ the set $\{x \in X : u <_x v\}$ is $\cF$-measurable. The past of $x$ consists of the points $\{g^{-1} \cdot x : g <_x 1_G\}$, and this past breaks into two pieces -- a part that is internal to the $T$-action and a part that is external. This concept leads to the notion of a past $\sigma$-algebra for any given partition. We next describe in detail this $\sigma$-algebra for the external portion of the past.

For a partition $\alpha$ of $X$ and $Y \subseteq X$ we define another partition of $X$ by
$$\alpha \res Y = \{X \setminus Y\} \cup \{A \cap Y : A \in \alpha\}.$$
Similarly, for a $\sigma$-algebra $\Sigma$ we write $\Sigma \res Y$ for the $\sigma$-algebra on $X$ generated by the sets $\{X\} \cup \{B \cap Y : B \in \Sigma\}$.

\begin{defn}
Let $G \acts (X, \mu)$ be an aperiodic {\pmp} action, let $\cF$ be a $G$-invariant class-bijective sub-$\sigma$-algebra, and let $T \in [E_G^X]$ and $f : X \rightarrow [0, 1]$ be as in Lemma \ref{lem:slice}. For a countable partition $\xi$ of $X$ and $S \subseteq [0, 1]$ write $\xi_S$ for the partition $\xi_S = \xi \res f^{-1}(S)$. We define the \emph{external past} of $\xi$ as
$$\sP_\xi = \cF \vee \bigvee_{t \in [0, 1]} \Big( \salg_G(\xi_{[0,t)}) \res f^{-1}([t,1]) \Big).$$
\end{defn}

In other words, the external past of $\xi$ consists of the sets that you can measure by using $G$ to travel to strictly ``smaller'' $T$-orbits and looking at the partition $\xi$ (we also include $\cF$ in this $\sigma$-algebra for technical reasons). The next lemma verifies that these $\sigma$-algebras behave as expected with respect to containment.

\begin{lem}[Seward, \cite{S18}] \label{lem:order}
Let $G \acts (X, \mu)$ be an aperiodic {\pmp} action, let $\cF$ be a $G$-invariant class-bijective sub-$\sigma$-algebra, and let $T \in [E_G^X]$ and $f : X \rightarrow [0, 1]$ be as in Lemma \ref{lem:slice}. Fix a countable partition $\xi$ of $X$. For all $u, v \in G$ and every $D \in \cF$ with $D \subseteq \{x \in X : u <_x v\}$ we have
$$\Big[ u \cdot \Big( \textstyle{\bigvee_{k \leq 0} T^k(\xi)} \vee \sP_\xi \Big) \Big] \res D \subseteq v \cdot \Big( \textstyle{\bigvee_{k < 0} T^k(\xi)} \vee \sP_\xi \Big).$$
\end{lem}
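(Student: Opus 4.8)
\textbf{Proof proposal for Lemma \ref{lem:order}.}

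The plan is to reduce to a computation about the orbit-ordering $\leq_x$, and to exploit the $\cF$-measurability of the ordering sets together with the $\cF$-expressibility of $T$. First I would unpack the definition of the external past: a generating set for $\sP_\xi$ has the form $B \cup f^{-1}([t,1])$ or lies in $\cF$, where $B$ belongs to $\salg_G(\xi_{[0,t)}) \res f^{-1}([t,1])$; a generating set for $\bigvee_{k \leq 0} T^k(\xi)$ has the form $T^{-k}(A)$ with $A \in \xi$, $k \geq 0$. Translating by $u$ and intersecting with $D$, it suffices to show that each of $u \cdot (T^{-k}(A)) \cap D$ and $u \cdot B \cap D$ lies in $v \cdot \big(\bigvee_{k<0} T^k(\xi) \vee \sP_\xi\big)$. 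Since $D \subseteq \{u <_x v\}$ and $D \in \cF \subseteq \sP_\xi$, and $v \cdot \sP_\xi$ restricted to $v \cdot (\text{anything containing } D)$ behaves well (note $D$ is $G$-invariant-measurable only in the weak sense that $v^{-1} \cdot D$ is again $\cF$-measurable because $\cF$ is $G$-invariant), the real content is to see that the $u$-translate of the ``past up to and including the present $T$-orbit of $u^{-1}\cdot x$'' sits inside the $v$-translate of the ``past strictly before the $T$-orbit of $v^{-1}\cdot x$.''

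The key step is the following orbit-level claim: for $x \in D$ (so $u <_x v$), the point $u^{-1}\cdot x$ and every point $T^{-k}(u^{-1}\cdot x)$, $k \geq 0$, as well as every point reachable from $u^{-1}\cdot x$ by traveling to strictly smaller $T$-orbits, is \emph{strictly} $<_x$-below $v^{-1}\cdot x$ in the sense needed — i.e. it lies in the external past of $\xi$ as seen from $v^{-1}\cdot x$. Concretely, $u <_x v$ means either $f(u^{-1}\cdot x) < f(v^{-1}\cdot x)$, or $f(u^{-1}\cdot x) = f(v^{-1}\cdot x)$ and $T^{-m}(u^{-1}\cdot x) = v^{-1}\cdot x$ for some $m \geq 1$ (the case $m=0$ is excluded by $u <_x v$ rather than $u \leq_x v$). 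In the first case, since $f$ is $T$-invariant, the whole $T$-orbit of $u^{-1}\cdot x$ has $f$-value strictly less than $f(v^{-1}\cdot x)$, so $\bigvee_{k\leq 0} T^k(\xi)$ read at $u^{-1}\cdot x$ is measurable in $\salg_G(\xi_{[0,t)}) \res f^{-1}([t,1])$ for $t = f(v^{-1}\cdot x)$, hence in $\sP_\xi$ as seen from $v^{-1}\cdot x$; and $\sP_\xi$ itself at $u^{-1}\cdot x$ only references $T$-orbits with $f$-value $\leq f(u^{-1}\cdot x) < t$, which is still strictly in the past. In the second case, $u^{-1}\cdot x = T^{m}(v^{-1}\cdot x)$ with $m\geq 1$, so $\bigvee_{k\leq 0} T^k(\xi)$ at $u^{-1}\cdot x$ equals $\bigvee_{k\leq m} T^k(\xi)$ at $v^{-1}\cdot x$, which since $m \geq 1$ is contained in $\bigvee_{k<0}T^k(\xi)$ read at $v^{-1}\cdot x$ — wait, one must be careful about the direction of the $T$-shift; the point is that $T^{m}(v^{-1}\cdot x)$ for $m\geq 1$ is strictly in the $v$-indexed future or past depending on sign conventions, and the ordering $\leq_x$ was rigged (via ``$T^{-m}(u^{-1}\cdot x) = v^{-1}\cdot x$'') precisely so that it lands in the piece $\bigvee_{k<0}T^k(\xi) \vee \sP_\xi$. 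I would track the sign convention carefully here and invoke the $\cF$-measurability of $\{u <_x v\}$ (noted in the text) to make the fiberwise statement into a genuine containment of $\sigma$-algebras after restricting to $D$.

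To assemble this into the claimed inclusion, I would argue: (i) restriction to the $\cF$-set $D$ commutes with the relevant joins and $G$-translates because $\cF$ is $G$-invariant and $D \subseteq \{u<_x v\}$, so $v^{-1}\cdot D$ is again $\cF$-measurable and everything can be decomposed along $D$; (ii) on $D$, each generator of $u\cdot(\bigvee_{k\leq 0}T^k(\xi) \vee \sP_\xi)$ agrees, by the orbit-level claim and the $T$-invariance and $\cF$-measurability of $f$, with a set in $v\cdot(\bigvee_{k<0}T^k(\xi) \vee \sP_\xi)$; (iii) conclude by the fact that a $\sigma$-algebra generated by sets all lying in a second $\sigma$-algebra is contained in it. The main obstacle I anticipate is purely bookkeeping: correctly matching the sign/shift conventions so that ``$T^{-m}(u^{-1}\cdot x)=v^{-1}\cdot x$ for some $m\geq 1$'' translates into membership in $\bigvee_{k<0}T^k(\xi)$ rather than $\bigvee_{k\leq 0}T^k(\xi)$, and verifying that the $\cF$-vee component $\sP_\xi$ of the past-at-$u$ does not spill into the ``present'' $T$-orbit at $v$ — this is where the definition of $\sP_\xi$ as using $\salg_G(\xi_{[0,t)})$ with a strict interval $[0,t)$ does the work. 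No deep tool is needed beyond Lemma \ref{lem:slice} and the stated measurability of the ordering sets; the delicacy is entirely in the careful fiberwise case analysis and in checking that restriction to $D$ is benign.
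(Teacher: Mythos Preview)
The paper does not supply its own proof of this lemma: it is quoted from \cite{S18} and used as a black box, with no argument given here. So there is no in-paper proof to compare your proposal against.

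That said, your outline matches the expected shape of the argument and does not contain a genuine gap. The two-case split according to whether $f(u^{-1}\cdot x) < f(v^{-1}\cdot x)$ or $f(u^{-1}\cdot x) = f(v^{-1}\cdot x)$ with $T^{-m}(u^{-1}\cdot x) = v^{-1}\cdot x$ for some $m \geq 1$ is exactly the right dichotomy, and your use of the $T$-invariance of $f$, the $\cF$-measurability of $\{x : u <_x v\}$, and the strict interval $[0,t)$ in the definition of $\sP_\xi$ are the correct ingredients. You are also right that the only delicate point is the sign convention on the $T$-shift; once you fix it consistently (noting that $u^{-1}\cdot x = T^m(v^{-1}\cdot x)$ with $m \geq 1$ forces the $\xi$-information at $T^j(u^{-1}\cdot x)$ for $j \geq 0$ to appear among the $T^{j+m}(v^{-1}\cdot x)$ with $j+m \geq 1$, hence inside $v \cdot \bigvee_{k<0} T^k(\xi)$), the rest is bookkeeping. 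The one place to be slightly more careful than your sketch indicates is the $\sP_\xi$-component in the equal-$f$-value case: you need that the external past seen from $u^{-1}\cdot x$ coincides with the external past seen from $v^{-1}\cdot x$ when they share a $T$-orbit, which follows because $\sP_\xi$ depends only on the $f$-level and on $\cF$, both $T$-invariant.
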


Finally, an important feature of the external past is how it relates Rokhlin entropy to classical Kolmogorov--{\sinai} entropy.

\begin{lem}[Seward, \cite{S18}] \label{lem:sent}
Let $G \acts (X, \mu)$ be an aperiodic {\pmp} action, let $\cF$ be a $G$-invariant class-bijective sub-$\sigma$-algebra, and let $T \in [E_G^X]$ and $f : X \rightarrow [0, 1]$ be as in Lemma \ref{lem:slice}. If $\xi$ is a countable partition with $\sH(\xi \given \cF) < \infty$, then
$$\rh_G(\xi \given \cF) \leq \ksh_T(\xi \given \sP_\xi),$$
where $\ksh$ denotes Kolmogorov--{\sinai} entropy.
\end{lem}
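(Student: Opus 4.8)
The plan is to prove the inequality straight from the definition of Rokhlin entropy: for every $\epsilon>0$ I will produce a countable partition $\alpha$ with $\xi\subseteq\salg_G(\alpha)\vee\cF$ and $\sH(\alpha\given\cF)\le\ksh_T(\xi\given\sP_\xi)+\epsilon$, which is exactly what is needed. Two preliminary reductions come first. Since $\sH(\xi\given\cF)<\infty$, I may replace $\xi$ by a finite partition at the cost of an arbitrarily small change in each entropy below, so I assume $\xi$ finite. Next I rewrite the target: because $f$ is $T$-invariant and $T\in[E_G^X]$ fixes every $G$-invariant sub-$\sigma$-algebra setwise (in particular $\cF$ and each $\salg_G(\xi_{[0,t)})$), the external past $\sP_\xi$ is $T$-invariant, and hence the Kolmogorov--{\sinai} entropy of the single transformation $T$ relative to $\sP_\xi$ is the one-step conditional entropy
$$\ksh_T(\xi\given\sP_\xi)=\sH\big(\xi\given\Pi\big),\qquad \Pi:=\sP_\xi\vee\textstyle\bigvee_{k<0}T^k(\xi).$$
Write $h$ for this number; note $\Pi$ bundles together both the external past and the internal ($T$-)past, and by $T$-invariance the one-step innovation has conditional entropy exactly $h$.

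The basic object is the \emph{innovation partition}: I choose, for $\mu$-a.e.\ value of $\Pi$, an injective relabeling of the atoms of $\xi$, giving a finite partition $\alpha$ with $\xi\subseteq\salg(\alpha)\vee\Pi$ and $\sH(\alpha\given\Pi)=\sH(\xi\given\Pi)=h$; ranking the atoms by decreasing conditional probability additionally pins down the conditional law of $\alpha$, which will matter below. The point of $\alpha$ is that it carries only the part of $\xi$ not already predictable from the full past $\Pi$. What makes this usable for the $G$-action is that $\Pi$ is itself recoverable from $\salg_G(\alpha)\vee\cF$: the internal past $\bigvee_{k<0}T^k(\xi)$ is reached by applying the $\cF$-expressible maps $T^{-1},T^{-2},\dots$, while the external past $\sP_\xi$ is assembled from the values of $\xi$ on strictly smaller $T$-orbits, which are accessed through $G$.

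The recovery of $\xi$ from $\alpha$, $\cF$, and the $G$-action is where Lemma \ref{lem:order} is indispensable. One reconstructs $\xi$ by an induction along the order $\le_x$: to decode $\xi(x)$ one inverts the relabeling $\alpha(x)$ using $\Pi(x)$, and Lemma \ref{lem:order} guarantees that every piece of data entering $\Pi(x)$ — whether a lower $T$-position or a point on a strictly smaller orbit — lies strictly below $x$ in the order and has therefore already been reconstructed. Since the relations $\{u<_x v\}$ are $\cF$-measurable and the induction respects the containment in Lemma \ref{lem:order}, the reconstructed partition agrees with $\xi$ on a $G$-invariant conull set, yielding $\xi\subseteq\salg_G(\alpha)\vee\cF$.

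The main obstacle is the entropy estimate, and it is concentrated in a single gap. I have produced $\alpha$ with $\sH(\alpha\given\Pi)=h$, but Rokhlin entropy is measured against $\cF$, and since $\cF\subseteq\Pi$,
$$\sH(\alpha\given\cF)=\sH(\alpha\given\Pi)+I(\alpha;\Pi\given\cF)=h+I(\alpha;\Pi\given\cF).$$
The excess is the conditional mutual information between the innovation and the past. Conditioning only against $\cF$ and the internal past would leave the strictly larger rate $\sH\big(\xi\given\cF\vee\bigvee_{k<0}T^k(\xi)\big)$, so the external past must genuinely lower the stored entropy; the difficulty is that no single discrete relabeling can be made exactly independent of $\sP_\xi$ given $\cF$. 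The crux is therefore to drive $I(\alpha;\Pi\given\cF)$ below $\epsilon$ by improving the coding — concretely, by spreading the innovation over a tall Rokhlin tower for $T$ with $\cF$-measurable base and recoding an entire length-$n$ itinerary at once against $\Pi$, so that the per-symbol cross-correlation with the external past is averaged out as $n\to\infty$. Proving that this averaging really forces $I(\alpha;\Pi\given\cF)\to0$ while keeping the reconstruction of the previous paragraph valid is the heart of the argument and the step I expect to demand the most care; the tools I would use are the $T$-invariance of $\sP_\xi$ and the one-directional flow of information along the order guaranteed by Lemma \ref{lem:order}.
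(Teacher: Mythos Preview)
This lemma is not proved in the present paper at all; it is quoted verbatim from \cite{S18}, so there is no in-paper argument to compare against.  What follows is therefore a comment on the internal soundness of your outline.

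Your architecture is right --- produce a low-entropy $\alpha$ and then use the order $<_x$ together with Lemma \ref{lem:order} to recover $\xi$ inside $\salg_G(\alpha)\vee\cF$ --- and you have correctly located the only genuine obstacle.  The gap, however, is not closed by the single-step innovation partition: ranking the atoms of $\xi$ by conditional probability makes the conditional law of $\alpha$ given $\Pi$ equal to the \emph{sorted} conditional law of $\xi$ given $\Pi$, but that sorted law still varies with $\Pi$ in general, so $I(\alpha;\Pi\given\cF)$ need not be small.  The Rokhlin-tower fix you sketch is indeed the right move, but it is better framed not as ``averaging mutual information'' but as producing an entirely new partition whose \emph{unconditional} Shannon entropy is already close to $h$: block-code the length-$n$ $(T,\xi)$-itinerary into $\approx 2^{nh}$ codewords using $\Pi$ at the base, then spread the codeword over the $n$ levels.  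That is exactly a relative Rokhlin/Krieger generator theorem for the aperiodic $\Z$-action $T$ --- for every $\epsilon>0$ there is $\alpha$ with $\sH(\alpha)<\ksh_T(\xi\given\sP_\xi)+\epsilon$ and $\xi\subseteq\bigvee_{k\in\Z}T^k(\alpha)\vee\sP_\xi$ --- and once that is available, $\sH(\alpha\given\cF)\le\sH(\alpha)$ gives the bound with no further bookkeeping.  A second point of care: the order $<_x$ is a total order but not a well-order, so the ``induction along the order'' in your reconstruction paragraph cannot be run pointwise; it has to be recast as a supremum argument on the $f$-levels (show that $s=\sup\{t:\xi_{[0,t)}\subseteq\salg_G(\alpha)\vee\cF\}$ is attained and then push past it using $\xi\subseteq\bigvee_k T^k(\alpha)\vee\sP_\xi$ on $f^{-1}(\{s\})$), which is how \cite{S18} organizes it.
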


Before presenting the proof of the main theorem we need one more simple lemma. In the remainder of this section, we write $\Exp_{\cF}$ for the projection operator on $\BL^2_\mu(X)$ given by taking the conditional expectation with respect to a sub-$\sigma$-algebra $\cF$.

\begin{lem} \label{lem:convex}
Let $G \acts (X, \mu)$ be a {\pmp} action, and let $\cF$ be a $G$-invariant sub-$\sigma$-algebra. If $\HS$ satisfies $\rh_G(\salg_G(\HS) \given \cF) > 0$ then there is a finite partition $\alpha$ with $\rh_G(\alpha \given \cF) > 0$ and satisfying for all sub-$\sigma$-algebras $\cA$
$$\alpha \subseteq \cA \Longleftrightarrow \alpha \subseteq \salg(\Exp_{\cA}(\HS)).$$
\end{lem}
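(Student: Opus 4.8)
\textbf{Proof proposal for Lemma \ref{lem:convex}.}

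The plan is to build the partition $\alpha$ by a careful approximation-and-truncation argument, using the equivalence $\alpha\subseteq\cA\Leftrightarrow\alpha\subseteq\salg(\Exp_\cA(\HS))$ as the key structural feature to aim for. The natural way to force this biconditional is to arrange that $\alpha$ is generated by level sets of a single bounded $\salg_G(\HS)$-measurable function $\psi$ that actually lies in $\HS$ (after the orthogonal projection trickery): if $\psi\in\HS$ then for any sub-$\sigma$-algebra $\cA$ with $\alpha\subseteq\cA$ we automatically get $\Exp_\cA(\psi)=\psi$, so $\psi\in\Exp_\cA(\HS)$ and hence $\alpha\subseteq\salg(\Exp_\cA(\psi))\subseteq\salg(\Exp_\cA(\HS))$; the reverse implication $\alpha\subseteq\salg(\Exp_\cA(\HS))\Rightarrow\alpha\subseteq\cA$ is trivial since $\Exp_\cA(\HS)$ consists of $\cA$-measurable functions. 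So the entire content of the lemma reduces to: \emph{from $\rh_G(\salg_G(\HS)\given\cF)>0$, produce a single bounded $\psi\in\HS$ whose (finitely many, after rounding) level sets form a partition $\alpha$ with $\rh_G(\alpha\given\cF)>0$.}

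First I would unwind the definition of $\salg_G(\HS)$: it is generated by the $G$-translates of sets $\psi^{-1}(D)$ for $\psi\in\HS$ and $D\subseteq\C$ Borel. Since $\rh_G(\salg_G(\HS)\given\cF)>0$, this $\sigma$-algebra is not contained in $\cF$ (otherwise its Rokhlin entropy would vanish), and moreover Rokhlin entropy of a $\sigma$-algebra is the supremum of the Rokhlin entropies of the countable partitions it contains; I would want a quantitative version showing some finite sub-collection already has positive Rokhlin entropy. The cleanest route: pick a countable dense sequence $(\psi_j)$ in $\HS$ (or rather in the unit ball of $\HS$, bounded functions being dense there in $\BL^2$-norm after truncation — here one must be slightly careful, but truncating $\psi_j$ at height $N$ stays in $\BL^2$ and the truncations can be taken inside $\HS$ only if $\HS$ is closed under truncation, which it is not in general). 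To avoid this subtlety I would instead take $\alpha$ to be generated by level sets of $\Re\psi$ and $\Im\psi$ for a \emph{single} $\psi\in\HS$ chosen so that the generated partition already has positive Rokhlin entropy, and handle boundedness by a separate limiting argument: if no single $\psi$ works, then every finite partition of the above form has zero Rokhlin entropy, and by monotonicity and continuity of Rokhlin entropy (subadditivity over countable generating families, as in \cite{S1}) one would deduce $\rh_G(\salg_G(\HS)\given\cF)=0$, a contradiction. This gives a $\psi\in\HS$ and a finite partition $\alpha$ by rational-boundary boxes of $\C$ with $\rh_G(\alpha\given\cF)>0$; refining $\alpha$ only increases its Rokhlin entropy and keeps it $\psi^{-1}(\text{Borel})$-measurable, so we may take $\alpha=\{\psi^{-1}(D_i)\}$ for finitely many Borel $D_i\subseteq\C$.

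The remaining point — replacing ``$\alpha\subseteq\salg(\Exp_\cA(\HS))$'' with the requirement that $\alpha$ come from level sets of an honest element of $\HS$ — is exactly where I expect the main obstacle. A finite partition $\alpha$ into sets $\psi^{-1}(D_i)$ is measurable with respect to $\psi$, but its atoms $A_i=\psi^{-1}(D_i)$ need not be such that $\pone_{A_i}\in\HS$. What we do get is that each $\pone_{A_i}$ lies in $\salg(\HS)$ and is $\salg(\psi)$-measurable; and since $\psi\in\HS$, we have $\Exp_\cA(\psi)=\psi$ whenever $\alpha\subseteq\cA$ — but we need $\alpha\subseteq\cA$ to be \emph{equivalent} to $\alpha\subseteq\salg(\Exp_\cA(\HS))$, and the forward direction ``$\alpha\subseteq\cA\Rightarrow\alpha\subseteq\salg(\Exp_\cA(\HS))$'' does follow from $\psi\in\HS$ (then $\psi=\Exp_\cA\psi\in\Exp_\cA(\HS)$ and $\alpha$ is $\psi$-measurable, hence $\salg(\Exp_\cA(\HS))$-measurable), while the reverse is automatic. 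So in fact it suffices to choose $\alpha$ to be generated by the level sets of a single $\psi\in\HS$, which the previous paragraph already arranges — there is no need for $\pone_{A_i}\in\HS$. I would therefore structure the final write-up as: (i) produce $\psi\in\HS$ and a finite Borel partition $\{D_i\}$ of $\C$ with $\alpha:=\{\psi^{-1}(D_i)\}$ having $\rh_G(\alpha\given\cF)>0$, via the contradiction argument using countable subadditivity of Rokhlin entropy; (ii) observe $\alpha$ is $\salg(\psi)$-measurable; (iii) check the biconditional as above. The one genuine technical check is the reduction in step (i) — that if every such finite $\alpha$ had zero Rokhlin entropy then so would $\salg_G(\HS)$ — which I would justify by approximating an arbitrary countable partition in $\salg_G(\HS)\vee\cF$ by finite partitions generated by $G$-translates of finitely many level sets, plus the standard fact (from \cite{S1}) that $\rh_G(\bigvee_n\beta_n\given\cF)\le\sum_n\rh_G(\beta_n\given\cF)$ for $G$-invariant joins together with continuity in the partition.
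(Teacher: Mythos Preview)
Your overall architecture --- reduce to a single $\psi\in\HS$ via countable subadditivity of Rokhlin entropy, take $\alpha=\psi^{-1}(\beta)$ for a finite partition $\beta$ of $\C$, and then verify the biconditional --- matches the paper's proof. The subadditivity step is exactly how the paper produces the required $\psi$ and $\beta$.

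There is, however, a genuine gap in your verification of the forward implication. You assert that $\alpha\subseteq\cA$ gives $\Exp_\cA(\psi)=\psi$. This is false: knowing that each set $\psi^{-1}(D_i)$ lies in $\cA$ tells you only which cell $D_i$ the value $\psi(x)$ falls in, not the value itself. If $\psi$ is not already simple, it will typically \emph{not} be $\cA$-measurable (take $\psi(x)=x$ on $[0,1]$, $\beta=\{[0,\tfrac12),[\tfrac12,1]\}$, and $\cA=\salg(\alpha)$). So the step ``$\psi=\Exp_\cA\psi\in\Exp_\cA(\HS)$'' does not go through.

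The fix --- and the reason the lemma carries the label \texttt{lem:convex} --- is to choose the cells of $\beta$ to be \emph{convex} subsets of $\C$. Then for $A=\psi^{-1}(B)\in\cA$ with $B$ convex, the conditional expectation $\Exp_\cA(\psi)$ restricted to $A$ is a (generalized) average of values of $\psi$ on $A$, all of which lie in $B$; convexity forces $\Exp_\cA(\psi)(x)\in B$ for $x\in A$. Doing this for each cell gives $\alpha=\Exp_\cA(\psi)^{-1}(\beta)\subseteq\salg(\Exp_\cA(\HS))$, which is the correct forward implication. Your incidental choice of ``rational-boundary boxes'' happens to give convex cells, so your construction would survive, but the argument you wrote for the biconditional must be replaced by this convexity observation rather than the incorrect claim $\Exp_\cA(\psi)=\psi$.
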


\begin{proof}
Clearly $\salg(\Exp_{\cA}(\HS)) \subseteq \cA$, so the right-hand side always implies the left. For the reverse implication, it suffices to pick any partition $\beta$ of $\C$ into convex sets (for example, dividing the complex plane using a finite number of bi-infinite straight lines) and pick any $\psi \in \HS$ and set $\alpha = \psi^{-1}(\beta)$. Indeed, assuming $\alpha \subseteq \cA$, its a basic property of conditional expectation that if $A \in \alpha$ and $x \in A$ then $\Exp_{\cA}(\psi)(x)$ will lie in the (not necessarily closed) convex set generated by $\psi(A)$. By construction, the sets $\psi(A)$, $A \in \alpha$, are contained in the pairwise-disjoint convex sets $B \in \beta$. Therefore $\alpha = \Exp_{\cA}(\psi)^{-1}(\beta)$, and hence $\alpha \subseteq \salg(\Exp_{\cA}(\HS))$. To finish the proof, we only need to find a finite $\alpha$ with $\rh_G(\alpha \given \cF) > 0$.

Let $(\psi_n)_{n \in \N}$ be a dense subset of $\HS$. For any sub-$\sigma$-algebra $\Sigma$, $\BL^2$-limits of $\Sigma$-measurable functions are $\Sigma$-measurable. So we have $\salg(\HS) = \bigvee_{n \in \N} \salg(\psi_n)$. Sub-additivity of Rokhlin entropy \cite{AS} gives
$$0 < \rh_G(\salg(\HS) \given \cF) \leq \sum_{n \in \N} \rh_G(\salg(\psi_n) \given \cF).$$
So there is $\psi \in \HS$ with $\rh_G(\salg(\psi) \given \cF) > 0$. Let $(\beta_n)_{n \in \N}$ be an increasing sequence of finite partitions of $\C$ with $\Borel(\C) = \bigvee_{n \in \N} \salg(\beta_n)$ and with all classes of $\beta_n$ convex. Then $\salg(\psi) = \bigvee_{n \in \N} \salg(\psi^{-1}(\beta_n))$. Again by sub-additivity $0 < \rh_G(\salg(\psi) \given \cF) \leq \sum_{n \in \N} \rh_G(\psi^{-1}(\beta_n) \given \cF)$. We conclude that there is a finite partition $\beta$ of $\C$ with all classes of $\beta$ convex, and $\psi \in \HS$ such that $\alpha = \psi^{-1}(\beta)$ satisfies $\rh_G(\alpha \given \cF) > 0$.
\end{proof}

\begin{proof}[Proof of Theorem \ref{thm:koop}]
We will prove the contra-positive. So let us assume that $\rh_G(\salg(\HS) \given \Sigma) > 0$. Let $\alpha$ be as in Lemma \ref{lem:convex}. Then $\rh_G(\alpha \given \Sigma) > 0$. By \cite[Lem. 2.3]{S18} we can enlarge $\Sigma$ to a $G$-invariant $\sigma$-algebra $\cF$ so that $\cF$ is class-bijective and $\rh_G(\alpha \given \cF) > 0$. Note that the map $\Stab$ is $\cF$-measurable since $\cF$ is class-bijective. Apply Lemma \ref{lem:slice} to get a $\cF$-expressible $T \in [E_G^X]$ and a $\cF$-measurable $f : X \rightarrow [0, 1]$.

Set $\cA = \bigvee_{n \leq 0} \salg(T^n(\alpha)) \vee \sP_\alpha$ and $\cA^- = \bigvee_{n < 0} \salg(T^n(\alpha)) \vee \sP_\alpha$. By Lemma \ref{lem:sent} we have
$$0 < \rh_G(\alpha \given \cF) \leq \ksh_T(\alpha \given \sP_\alpha) = \sH(\alpha \given \cA^-).$$
So $\alpha \not\subseteq \cA^-$ but $\alpha \subseteq \cA$. It follows from Lemma \ref{lem:convex} that
$$\HS_{1_G} = (\Exp_{\cA} - \Exp_{\cA^-})(\HS) \subseteq \BL^2_\mu(\cA) \ominus \BL^2_\mu(\cA^-)$$
is non-trivial.

Fix any non-trivial $\zeta \in \HS_{1_G}$. Let $\mu = \int_{\Sub(G) \times Y} \mu_{(\Gamma, y)} \ d \omega(\Gamma, y)$ be the disintegration of $\mu$ over $\omega$. For $g, h \in G$ we claim that $\rho(u_g) \zeta$ and $\rho(u_h) \zeta$ are $\mu_{(\Gamma, y)}$-orthogonal if $g h^{-1} \not\in \Gamma$ and that $\rho(u_g) \zeta$ and $\rho(u_h) \zeta$ agree $\mu_{(\Gamma, y)}$-almost-everywhere otherwise. The latter case is immediate as $g^{-1} \cdot x = h^{-1} \cdot x$ for $\mu_{(\Gamma, y)}$-almost-every $x$ whenever $g h^{-1} \in \Gamma$. Now consider the former case. Partition $\{x \in X : g h^{-1} \not\in \Stab(x)\}$ into the sets $D_1 = \{x \in X : g <_x h\}$ and $D_2 = \{x \in X : h <_x g\}$, and write $\chi_i$ for the characteristic function of $D_i$. Recalling that $\zeta$ is $\mu$-orthogonal to $\BL^2_\mu(\cA^-)$ and that $\chi_i$ is $\cF \subseteq \cA^-$-measurable, it follows that $\zeta \chi_i$ is $\mu$-orthogonal to $\BL^2_\mu(\cA^-) \chi_i$. Additionally, the map $\Stab \times \phi$ is measurable with respect to $\cF \subseteq \cA^-$, so $\zeta \chi_i$ is $\mu_{(\Gamma, y)}$-orthogonal to $\BL^2_{\mu_{(\Gamma, y)}}(\cA^-) \chi_i$ for $\omega$-almost-every $(\Gamma, y)$. Therefore by employing Lemma \ref{lem:order} we obtain
$$(\rho(u_g) \zeta) \cdot \chi_1 \in \BL^2_{\mu_{(\Gamma, y)}}(g \cdot \cA) \cdot \chi_1 \subseteq \BL^2_{\mu_{(\Gamma, y)}}(h \cdot \cA^-) \cdot \chi_1 \perp_{\mu_{(\Gamma, y)}} \rho(u_h) \zeta \cdot \chi_1$$
$$(\rho(u_h) \zeta) \cdot \chi_2 \in \BL^2_{\mu_{(\Gamma, y)}}(h \cdot \cA) \cdot \chi_2 \subseteq \BL^2_{\mu_{(\Gamma, y)}}(g \cdot \cA^-) \cdot \chi_2 \perp_{\mu_{(\Gamma, y)}} \rho(u_g) \zeta \cdot \chi_2.$$
Finally, when $g h^{-1} \not\in \Gamma$ we have $\chi_1 + \chi_2 = 1$ $\mu_{(\Gamma, y)}$-almost-everywhere and thus $\langle \rho(u_g) \zeta, \rho(u_h) \zeta \rangle_{\mu_{(\Gamma, y)}} = 0$ as claimed.

For $(\Gamma, y) \in \Sub(G) \times Y$ write $\|\zeta\|_{(\Gamma, y)}$ for $\|\zeta\|_{\mu_{(\Gamma, y)}}$ and define a vector $\eta \in \int_{\Sub(G) \times Y}^\oplus \ell^2(\Gamma \backslash G) \ d \omega(\Gamma, y)$ by
$$\eta(\Gamma, y) = \|\zeta\|_{(\Gamma, y)} \cdot \delta_{\Gamma}.$$
Set $S_g = \{\Gamma \in \Sub(G) : g \in \Gamma\}$. For $s, t \in \BL^\infty_\nu(Y)$ and $g, h \in G$ we have
\begin{align*}
\langle \rho(s u_g) \zeta, \rho(t u_h) \zeta \rangle_\mu &= \int_{\Sub(G) \times Y} s(y) \overline{t(y)} \langle \rho(u_g) \zeta, \rho(u_h) \zeta \rangle_{\mu_{(\Gamma, y)}} \ d \omega(\Gamma, y)\\
 & = \int_{S_{g h^{-1}} \times Y} s(y) \overline{t(y)} \cdot \|\rho(u_g) \zeta\|^2_{\mu_{(\Gamma, y)}} \ d \omega(\Gamma, y)\\
 & = \int_{S_{g h^{-1}} \times Y} s(y) \overline{t(y)} \|\zeta\|_{g^{-1} \cdot (\Gamma, y)}^2 \ d \omega(\Gamma, y)\\
 & = \int_{\Sub(G) \times Y} \left\langle s(y) \|\zeta\|_{g^{-1} (\Gamma, y)} \delta_{\Gamma g}, \ t(y) \|\zeta\|_{h^{-1} (\Gamma, y)} \delta_{\Gamma h} \right\rangle \ d \omega(\Gamma, y)\\
 & = \langle \lambda(s u_g) \eta, \lambda(t u_h) \eta \rangle.
\end{align*}
Thus the map $L : \rho(s u_g) \zeta \mapsto \lambda(s u_g) \eta$ is a $\BL^\infty_\nu(Y) \aprod G$-equivariant linear isometry. By continuity $L$ extends to $V$, the closed $\rho(\BL^\infty_\nu(Y) \aprod G)$-invariant linear subspace containing $\zeta$. Finally, by construction the projection from $\HS$ to $V$, $p_V(\HS)$, is non-trivial. The composition $L \circ p_V$ is a $\BL^\infty_\nu(Y) \aprod G$-equivariant bounded linear map from $\HS$ into $\int_{\Sub(G) \times Y}^\oplus \ell^2(\Gamma \backslash G) \ d \omega(\Gamma, y)$. From the Polar Decomposition Theorem it follows that there is a closed $\rho(\BL^\infty_\nu(Y) \aprod G)$-invariant subspace of $\HS$ that isometrically and $\BL^\infty_\nu(Y) \aprod G$-equivariantly embeds into $\int_{\Sub(G) \times Y}^\oplus \ell^2(\Gamma \backslash G) \ d \omega(\Gamma, y)$.
\end{proof}

As an immediate corollary we are able to compute the Rokhlin entropy of some Gaussian actions. For a real Hilbert space $\HS$ we write $\mathcal{O}(\HS)$ for the set of orthogonal operators on $\HS$. Recall that from any representation $\pi : G \rightarrow \mathcal{O}(\HS)$ one can build an associated {\pmp} action $G \acts (X_\pi, \mu_\pi)$ called the \emph{Gaussian action} induced by $\pi$ (see \cite{Hb} for details).

\begin{cor} \label{cor:gaussian}
Let $G$ be a countably infinite group, let $\pi : G \rightarrow \mathcal{O}(\HS)$ be an orthogonal representation on a real separable Hilbert space $\HS$, and let $\lambda_\R : G \rightarrow \mathcal{O}(\ell^2(G, \R))$ be the real left-regular representation of $G$. Suppose that $\pi$ is singular with $\lambda_\R$. Then the corresponding Gaussian action $G \acts (X_\pi, \mu_\pi)$ satisfies $\rh_G(X_\pi, \mu_\pi) = 0$.
\end{cor}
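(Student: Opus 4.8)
The plan is to derive this directly from Theorem~\ref{thm:koop} applied with the trivial factor $(Y,\nu)$ being a one-point space, so that $\BL^\infty_\nu(Y) \aprod G$ reduces to the group $*$-algebra $\C[G]$, the generalized Koopman representation becomes the ordinary Koopman representation $\rho$ on $\BL^2_{\mu_\pi}(X_\pi)$, and the generalized left-regular representation becomes $\lambda$ on $\int^\oplus \ell^2(\Gamma\backslash G)\,d\omega(\Gamma)$ where $\omega = \Stab_*(\mu_\pi)$. First I would check that the Gaussian action $G \acts (X_\pi,\mu_\pi)$ is aperiodic: this holds because $\pi$ is singular with $\lambda_\R$ and in particular $\pi$ has no nonzero invariant vectors (a nonzero invariant vector generates a copy of the trivial representation, which embeds into $\lambda_\R$ when $G$ is infinite... actually one must be slightly careful here, so I would instead argue that singularity with $\lambda_\R$ forces $\pi$ to be weakly mixing, or simply note that the relevant statement is that the Gaussian action has no finite orbits, equivalently $\Stab_*(\mu_\pi)$ is supported on infinite-index subgroups; in fact the Gaussian action of any infinite group on a nontrivial Gaussian space is free, so this is automatic once $\HS \neq 0$, and $\HS \neq 0$ since otherwise $X_\pi$ is trivial and there is nothing to prove). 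So I take $\HS' = \BL^2_{\mu_\pi}(X_\pi) \ominus \C$, which is $\rho(\C[G])$-invariant and closed.

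The key step is to verify that $\rho|_{\HS'}$ is singular with $\lambda$ in the sense of $*$-representations of $\C[G]$, i.e.\ that no nonzero $G$-invariant subspace of $\BL^2_{\mu_\pi}(X_\pi) \ominus \C$ embeds $G$-equivariantly into $\int^\oplus \ell^2(\Gamma\backslash G)\,d\omega(\Gamma)$. Since the action is free, $\omega = \delta_{\{1_G\}}$ and the target is just $\ell^2(G)$, so I need: no nonzero $G$-invariant closed subspace of $\BL^2_{\mu_\pi}(X_\pi) \ominus \C$ embeds into $\lambda$. This is the classical Fock-space description of the Koopman representation of a Gaussian action: $\BL^2_{\mu_\pi}(X_\pi) \ominus \C$ decomposes as the orthogonal direct sum $\bigoplus_{n \geq 1} \HS^{\odot n}_{\C}$ of the symmetric tensor powers of the complexification $\HS_\C$ of $\HS$, with $G$ acting diagonally by $\pi_\C^{\odot n}$. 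Each $\pi_\C^{\odot n}$ is a subrepresentation of the $n$-fold tensor power $\pi_\C^{\otimes n}$. Now I invoke the standard fact that if $\pi_\C$ is singular with $\lambda_\C = \lambda$ (which follows from $\pi$ singular with $\lambda_\R$ after complexifying), then every tensor power $\pi_\C^{\otimes n}$ is also singular with $\lambda$ — this uses that $\lambda \otimes \sigma \cong \lambda^{\oplus \dim \sigma}$ (Fell absorption) together with the fact that a representation singular with $\lambda$ has no nonzero subrepresentation that is "$\ell^2$-like"; more precisely, one shows that being singular with $\lambda$ is preserved under tensoring with an arbitrary representation using that the measures (in the sense of the associated $C^*$-algebra / spectral measures) stay mutually singular, or one cites Hayes~\cite{Hb} where exactly this Fock-space singularity computation is carried out. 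Hence $\rho|_{\HS'}$ is singular with $\lambda$.

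Given singularity, Theorem~\ref{thm:koop} (with $\Sigma$ trivial, so $\salg_G(\HS') = \salg_G(\BL^2_{\mu_\pi}(X_\pi)\ominus\C) = \Borel(X_\pi)$, since the real and imaginary parts of functions in $\HS'$ together with the constants separate points and generate the full $\sigma$-algebra of a Gaussian space) yields $\rh_G(X_\pi,\mu_\pi) = \rh_G(\Borel(X_\pi)) = 0$, which is the claim. The main obstacle I anticipate is the tensor-power singularity argument: one must be careful that "singular" in the operator-algebraic sense used in Theorem~\ref{thm:koop} (no nonzero subrepresentation embeds) is exactly the property that is stable under passing to symmetric tensor powers, and one must handle the infinite direct sum $\bigoplus_n \HS^{\odot n}_\C$ — but since a subrepresentation of a direct sum that embeds into $\lambda$ projects nontrivially onto some summand and that projection (being a subrepresentation of $\pi_\C^{\odot n}$) would then contain a nonzero piece embedding into $\lambda$, it suffices to handle each $n$ separately, and for this I would lean on the computation in \cite{Hb}. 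Everything else is a routine unwinding of definitions.
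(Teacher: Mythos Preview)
Your approach has a genuine gap: the ``standard fact'' that singularity with $\lambda$ is preserved under tensor powers is false. Already for $G = \Z$, a unitary representation is singular with $\lambda$ precisely when its spectral measure on $\mathbb{T}$ is Lebesgue-singular, and the spectral measure of $\pi_\C \otimes \pi_\C$ is the self-convolution $\mu_{\pi_\C} * \mu_{\pi_\C}$; there are well-known singular probability measures on $\mathbb{T}$ whose self-convolution has an absolutely continuous part (for instance, any measure whose Fourier coefficients lie in $\ell^2$ but not in $c_0$ appropriately---Salem-type constructions give concrete examples). So $\pi_\C^{\otimes 2}$, and hence $\rho|_{\BL^2_{\mu_\pi}(X_\pi)\ominus\C}$, can genuinely fail to be singular with $\lambda$ even though $\pi_\C$ is. Fell absorption does not help here: it tells you that $\lambda \otimes \sigma$ is a multiple of $\lambda$, but says nothing about $\pi_\C \otimes \pi_\C$ when neither factor is $\lambda$. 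Hayes \cite{Hb} does not carry out such a Fock-space singularity computation either.

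The fix, and the route the paper takes, is much simpler: instead of applying Theorem~\ref{thm:koop} to all of $\BL^2_{\mu_\pi}(X_\pi)\ominus\C$, apply it to the \emph{first chaos} $\mathcal{K}\subseteq \BL^2_{\mu_\pi}(X_\pi)$, the closed $G$-invariant subspace canonically identified with $\HS_\C$ on which $\rho$ restricts to $\pi_\C$. Singularity of $\rho|_{\mathcal{K}}$ with $\lambda$ is then immediate from the hypothesis (after complexifying), with no tensor-power argument needed. The point you are missing is that Theorem~\ref{thm:koop} only requires $\salg_G(\mathcal{K}) = \Borel(X_\pi)$, which is a $\sigma$-algebra statement, not a Hilbert-space statement; and it is a basic fact about Gaussian probability spaces that the first chaos generates the full Borel $\sigma$-algebra (the Gaussian variables themselves separate points). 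So the small subspace $\mathcal{K}$ already suffices, and one never needs to control the higher chaoses.
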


\begin{proof}
We sketch the proof as the details of the argument have already been worked out by Hayes in \cite{Hb}. As argued by Hayes, one can complexify $\pi$ to get a unitary representation $\pi_\C : G \rightarrow \cU(\HS_\C)$, where $\HS_\C = \HS \otimes_\R \C$. This new representation $\pi_\C$ will be singular with $\lambda$, where $\lambda$ is the usual (complex) left-regular representation. Furthermore, the Koopman representation $\rho : G \rightarrow \cU(\BL^2_{\mu_\pi}(X_\pi))$ admits a closed sub-representation $\mathcal{K}$ such that $\rho|_{\mathcal{K}}$ is isomorphic to $\pi_\C$ and such that $\salg_G(\mathcal{K}) = \Borel(X_\pi)$. Thus it follows from Theorem \ref{thm:koop} that
\begin{equation*}
\rh_G(X_\pi, \mu_\pi) = \rh_G(\salg_G(\mathcal{K})) = 0.\qedhere
\end{equation*}
\end{proof}

\begin{cor} \label{cor:equal}
Let $G$ be a countably infinite sofic group and let $\pi : G \rightarrow \mathcal{O}(\HS)$ be an orthogonal representation on a real separable Hilbert space $\HS$. Then the Rokhlin entropy of the corresponding Gaussian action $G \acts (X_\pi, \mu_\pi)$ is equal to the sofic entropy whenever the sofic entropy is not minus infinity.
\end{cor}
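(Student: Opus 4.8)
The plan is to deduce the corollary from just two inputs: Corollary~\ref{cor:gaussian}, which handles the singular case, and Ben Hayes' computation of the sofic entropy of Gaussian actions in \cite{Hb}. I would first recall the elementary bound that, for any sofic group $G$, any sofic approximation $\Sigma$, and any {\pmp} action, the sofic entropy $h_\Sigma$ is at most the Rokhlin entropy $\rh_G$ (this is the ``easy'' direction of the open problem discussed in the introduction, implicit in \cite{B10b,KL11a,AS}): the sofic entropy of a generating partition $\alpha$ never exceeds $\sH(\alpha)$, and $\rh_G$ is the infimum of $\sH(\alpha)$ over generating $\alpha$.

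Next I would extract the relevant statement from \cite{Hb}: Hayes shows that the sofic entropy of the Gaussian action $G \acts (X_\pi, \mu_\pi)$ lies in $\{-\infty, 0, +\infty\}$, and is equal to $+\infty$ precisely when $\pi$ fails to be singular with $\lambda_\R$ (equivalently, when $\pi$ has a nonzero part that embeds into $\lambda_\R^{\oplus\N}$). The reason such a dichotomy is natural is that a nonzero subrepresentation of $\lambda_\R^{\oplus\N}$ is necessarily infinite-dimensional when $G$ is infinite, so a nonzero absolutely continuous part of $\pi$ is infinite-dimensional and makes the symmetric Fock space underlying the Koopman representation contain infinitely many copies of $\lambda^{\oplus\N}$. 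I would also want to confirm that Hayes' notion of singularity matches the one used here after the complexification $\pi \rightsquigarrow \pi_\C$ already invoked in the proof of Corollary~\ref{cor:gaussian} (i.e.\ that $\pi$ is singular with the real left-regular representation $\lambda_\R$ exactly when $\pi_\C$ is singular with the complex $\lambda$); this bookkeeping is the only genuinely delicate point, and it is essentially settled in \cite{Hb} and already implicitly used above.

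With these facts the argument is immediate. Suppose the sofic entropy of $G \acts (X_\pi, \mu_\pi)$ is not $-\infty$. If it equals $+\infty$, then $h_\Sigma \le \rh_G(X_\pi, \mu_\pi)$ forces $\rh_G(X_\pi, \mu_\pi) = +\infty$, so the two entropies agree. Otherwise, by Hayes' trichotomy the sofic entropy equals $0$ and $\pi$ is singular with $\lambda_\R$; then Corollary~\ref{cor:gaussian} gives $\rh_G(X_\pi, \mu_\pi) = 0$, and again the two entropies agree. Since these two cases are exhaustive, $\rh_G(X_\pi, \mu_\pi)$ equals the sofic entropy whenever the latter is not $-\infty$. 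I expect the main obstacle to be purely expository rather than mathematical: pinning down the exact form of the computation in \cite{Hb} and reconciling its hypotheses (real versus complex coefficients, and ``singular'' versus Lebesgue-singularity relative to the group von Neumann algebra) with the setup of the present paper. No analytic input beyond Corollary~\ref{cor:gaussian} and the bound $h_\Sigma \le \rh_G$ should be required, and in particular Theorem~\ref{intro:sinai} is used only indirectly, through Corollary~\ref{cor:gaussian} and Theorem~\ref{thm:koop}.
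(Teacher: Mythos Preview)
Your proposal is correct and follows essentially the same route as the paper's own proof: both invoke the general bound $h_\Sigma \le \rh_G$, cite Hayes' computation in \cite{Hb} that (assuming the sofic entropy is not $-\infty$) the sofic entropy of $(X_\pi,\mu_\pi)$ is $0$ when $\pi$ is singular with $\lambda_\R$ and $+\infty$ otherwise, and then apply Corollary~\ref{cor:gaussian} to handle the singular case. The only difference is expository: you spell out the two-case split and the real/complex bookkeeping more explicitly than the paper does.
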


\begin{proof}
When combined with work of Hayes \cite{Hb}, this is an immediate consequence since sofic entropy is always both (a) bounded above by Rokhlin entropy, and (b) takes values in $\{-\infty\} \cup [0, +\infty]$. Specifically, in \cite{Hb} Hayes computed the sofic entropy of Gaussian actions under the assumption that their sofic entropy is not minus infinity. He showed that the sofic entropy is $0$ when the assumptions of Corollary \ref{cor:gaussian} hold and otherwise the sofic entropy is positive infinity.
\end{proof}

\section{Completely positive entropy and mixing}

Putting all of our results together, we are able to fully determine the $\BL^2$ structure of CPE$^+$ (and many CPE) actions. Furthermore, our results can be applied in a relative setting as well. Let $G \acts (X, \mu)$ be a {\pmp} action and let $G \acts (Y, \nu)$ be a factor action, say via $\phi : (X, \mu) \rightarrow (Y, \nu)$. We say that $(X, \mu)$ is \emph{CPE relative to} $(Y, \nu)$ if for every non-trivial intermediary factor
$$G \acts (X, \mu) \xrightarrow{f} G \acts (Z, \eta) \xrightarrow{\psi} G \acts (Y, \nu) \quad (\psi \circ f = \phi)$$
we have $\rh_G(Z, \eta \given \psi^{-1}(\Borel(Y))) > 0$. Similarly, we say that $(X, \mu)$ is \emph{CPE$^+$ relative to} $(Y, \nu)$ is $\rh_G(\cF \given \phi^{-1}(\Borel(Y))) > 0$ whenever $\cF$ is a sub-$\sigma$-algebra that properly contains $\phi^{-1}(\Borel(Y))$.

We would like to emphasize that the corollary below is a structure theorem for all positive entropy actions. Specifically, every {\pmp} action $G \acts (X, \mu)$ admits a smallest factor $G \acts (Y, \nu)$ (and a smallest factor $G \acts (Y_+, \nu_+)$) such that $(X, \mu)$ is CPE relative to $(Y, \nu)$ (respectively CPE$^+$ relative to $(Y_+, \nu_+)$). These are called the (Rokhlin) Pinsker factor and outer Pinsker factor, respectively. When $\rh_G(X, \mu) > 0$ these factors are proper and the structure theorem of the corollary below applies.

\begin{cor} \label{cor:cpekoop}
Let $G \acts (X, \mu)$ be an aperiodic {\pmp} action, and let $G \acts (Y, \nu)$ be a factor action, say via $\phi : (X, \mu) \rightarrow (Y, \nu)$. Set $\omega = (\Stab \times \phi)_*(\mu)$. Assume that either
\begin{enumerate}
\item [\rm (1)] $G \acts (X, \mu)$ is CPE$^+$ relative to $Y$, or
\item [\rm (2)] $G \acts (X, \mu)$ is CPE relative to $Y$ and every non-trivial intermediate factor is class-bijective.
\end{enumerate}
Let
$$\lambda : \BL^\infty_\nu(Y) \aprod G \rightarrow \cB \left( \int_{\Stab \times Y}^\oplus \ell^2(\Gamma \backslash G) \ d \omega(\Gamma, y) \right)$$
and $\rho : \BL^\infty_\nu(Y) \aprod G \rightarrow \cB(\BL^2(X, \mu))$ be the generalized left-regular and Koopman representations, respectively. Then $\rho|_{\BL^2_\mu(X) \ominus \BL^2_\nu(Y)}$ is isomorphic to $\lambda^{\oplus \N}$.
\end{cor}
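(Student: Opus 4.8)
\textbf{Proof proposal for Corollary \ref{cor:cpekoop}.}

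The plan is to establish the isomorphism via a two-sided embedding and then invoke Lemma \ref{lem:repiso}. The first half (the easy direction) is that $\lambda^{\oplus \N}$ embeds into $\rho|_{\BL^2_\mu(X) \ominus \BL^2_\nu(Y)}$. Under hypothesis (1), since $(X,\mu)$ is CPE$^+$ relative to $Y$ and $\phi^{-1}(\Borel(Y))$ is properly contained in $\Borel(X)$ (we may assume the latter, else there is nothing to prove), we get $\rh_G(X,\mu \given \phi^{-1}(\Borel(Y))) > 0$, so Corollary \ref{cor:sinaiembed} immediately gives the embedding $\lambda^{\oplus \N} \hookrightarrow \rho|_{\BL^2_\mu(X)\ominus\BL^2_\nu(Y)}$. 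Under hypothesis (2) one argues the same way after first checking that relative CPE together with class-bijectivity of every intermediate factor forces $\rh_G(X,\mu \given \phi^{-1}(\Borel(Y))) > 0$; one subtlety is that a priori relative CPE only gives positivity of $\rh_G(Z,\eta \given \psi^{-1}(\Borel(Y)))$ for intermediate factors $Z$, so I would take $Z = X$ directly (the trivial intermediate factor over $Y$), which is class-bijective by assumption, and conclude. In either case Corollary \ref{cor:sinaiembed} applies (note $\omega$ is supported on infinite-index subgroups since the action is aperiodic) and yields the first embedding.

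For the reverse embedding — that $\rho|_{\BL^2_\mu(X)\ominus\BL^2_\nu(Y)}$ embeds into $\lambda^{\oplus\N}$ — I would run a Zorn's lemma argument on top of Theorem \ref{thm:koop}. Consider the collection of $\rho(\BL^\infty_\nu(Y)\aprod G)$-invariant closed subspaces $\HS \subseteq \BL^2_\mu(X)\ominus\BL^2_\nu(Y)$ such that $\rho|_\HS$ embeds into $\lambda^{\oplus\N}$, ordered by inclusion; this is closed under increasing unions (take closed span; an embedding of the union is assembled from the pieces using that $\lambda^{\oplus\N}$ absorbs countable direct sums of sub-representations of $\lambda^{\oplus\N}$, via a Gram--Schmidt/orthogonalization of the images), so Zorn gives a maximal such $\HS_0$. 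Let $\HS_0^\perp$ be its orthocomplement inside $\BL^2_\mu(X)\ominus\BL^2_\nu(Y)$, which is again $\rho(\BL^\infty_\nu(Y)\aprod G)$-invariant. If $\HS_0^\perp \neq 0$, then $\rho|_{\HS_0^\perp}$ cannot be singular with $\lambda$: if it were, Theorem \ref{thm:koop} would give $\rh_G(\salg_G(\HS_0^\perp) \given \Sigma) = 0$, but $\salg_G(\HS_0^\perp)$ together with $\Sigma$ generates a sub-$\sigma$-algebra properly larger than $\Sigma$ (here I use that $\HS_0^\perp \neq 0$ lives in the orthocomplement of $\BL^2_\nu(Y)$), contradicting relative CPE$^+$ — or, under hypothesis (2), contradicting relative CPE after the same reduction to $Z = X$ as above. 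Hence some non-zero subrepresentation of $\rho|_{\HS_0^\perp}$ embeds into $\lambda$, hence into $\lambda^{\oplus\N}$; adjoining it to $\HS_0$ contradicts maximality. Therefore $\HS_0^\perp = 0$, i.e. $\rho|_{\BL^2_\mu(X)\ominus\BL^2_\nu(Y)}$ embeds into $\lambda^{\oplus\N}$.

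Combining the two embeddings with Lemma \ref{lem:repiso} (applied with $\pi_1 = \rho|_{\BL^2_\mu(X)\ominus\BL^2_\nu(Y)}$ and $\pi_2 = \lambda^{\oplus\N}$, both $*$-representations of $\BL^\infty_\nu(Y)\aprod G$ — note $\lambda^{\oplus\N}$ is such a representation on the countable direct sum of copies of $\int^\oplus \ell^2(\Gamma\backslash G)\,d\omega$) yields $\rho|_{\BL^2_\mu(X)\ominus\BL^2_\nu(Y)} \cong \lambda^{\oplus\N}$, as desired.

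\textbf{Main obstacle.} I expect the delicate point to be the contradiction step: verifying that when $\HS_0^\perp \neq 0$ the $\sigma$-algebra $\salg_G(\HS_0^\perp) \vee \Sigma$ genuinely strictly contains $\Sigma$, so that relative CPE$^+$ (resp. CPE) is violated. This requires knowing that a non-zero $\rho$-invariant subspace of $\BL^2_\mu(X)\ominus\BL^2_\nu(Y)$ cannot have all of its elements $\Sigma$-measurable — which is true since $\BL^2_\mu(\Sigma) = \BL^2_\nu(Y)$ is orthogonal to that subspace — but one must phrase it so the application of the relative CPE hypothesis is clean, including the reduction under hypothesis (2) that the relevant intermediate factor (all of $X$, or the factor generated by $\Sigma$ together with a single function from $\HS_0^\perp$) is class-bijective. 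A secondary bookkeeping obstacle is confirming that countable direct sums of sub-representations of $\lambda^{\oplus\N}$ re-embed into $\lambda^{\oplus\N}$ in this $*$-algebra setting, which should follow formally but needs to be stated.
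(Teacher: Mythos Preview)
Your argument for case (1) and the overall architecture (two embeddings plus Lemma \ref{lem:repiso}, Zorn's lemma for the reverse direction) match the paper's proof exactly.

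There is, however, a genuine gap in your treatment of case (2). In the contradiction step you write that $\rh_G(\salg_G(\HS_0^\perp)\given\Sigma)=0$ ``contradicts relative CPE after the same reduction to $Z=X$.'' It does not. Theorem \ref{thm:koop} applied to $(X,\mu)$ yields vanishing of the \emph{outer} entropy $\rh_G(\salg_G(\HS_0^\perp)\given\Sigma)$, while relative CPE only gives positivity of the \emph{factor} entropy $\rh_G(Z,\eta\given\Sigma')$ for the intermediate factor $Z$ associated with $\salg_G(\HS_0^\perp)\vee\Sigma$. These two quantities can differ --- that is precisely the distinction between CPE and CPE$^+$ highlighted in the introduction --- so taking $Z=X$ tells you nothing about $\salg_G(\HS_0^\perp)$.

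The paper's fix, which your ``Main obstacle'' paragraph gestures at but does not carry out, is to apply Theorem \ref{thm:koop} not to $(X,\mu)$ but to the intermediate factor $(Z,\eta)$ itself: identify $\HS_0^\perp$ with a subspace $\HS'\subseteq\BL^2_\eta(Z)$ satisfying $\salg_G(\HS')=\Borel(Z)$, so that Theorem \ref{thm:koop} at the $Z$ level would give $\rh_G(Z,\eta\given\Sigma')=0$, directly contradicting CPE. The role of the class-bijectivity hypothesis is exactly to make this work: it ensures $(\Stab_Z\times\phi')_*(\eta)=(\Stab_X\times\phi)_*(\mu)=\omega$, so that the generalized left-regular representation for $Z$ over $Y$ coincides with the original $\lambda$, and hence non-singularity of $\rho_Z|_{\HS'}$ with $\lambda$ is the same as non-singularity of $\rho|_{\HS_0^\perp}$ with $\lambda$, yielding the desired contradiction to maximality. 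Without this descent to $Z$, the class-bijectivity assumption is never used and the argument does not close.
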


\begin{proof}
Set $\Sigma = \phi^{-1}(\Borel(Y))$. Clearly $\rh_G(X, \mu \given \Sigma) > 0$ and therefore $\lambda^{\oplus \N}$ embeds into $\rho|_{\BL^2_\mu(X) \ominus \BL^2_\nu(Y)}$ by Corollary \ref{cor:sinaiembed}. By Lemma \ref{lem:repiso}, it suffices to show that $\rho|_{\BL^2_\mu(X) \ominus \BL^2_\nu(Y)}$ embeds into $\lambda^{\oplus \N}$. By a simple application of Zorn's lemma, there is a maximal $\rho(\BL^\infty_\nu(Y) \aprod G)$-invariant closed subspace $\mathcal{K}$ of $\BL^2_\mu(X) \ominus \BL^2_\nu(Y)$ such that $\rho|_{\mathcal{K}}$ embeds into $\lambda^{\oplus \N}$. Set $\HS = \BL^2_\mu(X) \ominus (\mathcal{K} + \BL^2_\nu(Y))$. To complete the proof we will argue that $\HS = \{0\}$ under assumption (1) and under assumption (2).

(1). By maximality of $\mathcal{K}$, $\rho|_{\HS}$ must be singular with $\lambda$. So by Theorem \ref{thm:koop} $\rh_G(\salg_G(\HS) \given \Sigma) = 0$. The assumption of CPE$^+$ then implies that $\salg_G(\HS) \subseteq \Sigma$. But $\HS$ is orthogonal to $\BL^2_\nu(Y) = \BL^2_\mu(\Sigma)$, so we must have $\HS = \{0\}$.

(2). Towards a contradiction, suppose that $\HS \neq \{0\}$. Let $G \acts (Z, \eta)$ be the intermediary factor associated with $\salg_G(\HS) \vee \Sigma$, and let $f : (X, \mu) \rightarrow (Z, \eta)$ be the factor map. Choose $\phi' : (Z, \eta) \rightarrow (Y, \nu)$ so that $\phi' \circ f = \phi$ and set $\Sigma' = (\phi')^{-1}(\Borel(Y))$. Every function in $\HS$ is $f^{-1}(\Borel(Z))$-measurable, so we may identify $\HS$ with a subspace $\HS'$ of $\BL^2_\eta(Z)$ satisfying $\salg_G(\HS') = \Borel(Z)$. Our assumptions imply that $f$ is class-bijective and
$$\rh_G(\salg(\HS') \given \Sigma') = \rh_G(Z, \eta \given \Sigma') > 0.$$
Since $f$ is class-bijective, $(\Stab \times \phi')_*(\eta)$ coincides with $\omega$. So Theorem \ref{thm:koop} implies that $\lambda$ is not singular with $\rho|_{\HS}$, contradicting the maximality of $\mathcal{K}$.
\end{proof}

Before stating the final corollary, let us review some definitions. Let $G \acts (X, \mu)$ be a {\pmp} action, let $\Sigma$ be a $G$-invariant sub-$\sigma$-algebra, let $G \acts (Y, \nu)$ be the factor associated with $\Sigma$, and let $\mu = \int_Y \mu_y \ d \nu(y)$ be the disintegration of $\mu$ over $\nu$. Recall that $G \acts (X, \mu)$ is \emph{mixing relative to $Y$} if for all $A, B \in \Borel(X)$ we have
$$\lim_{g \rightarrow \infty} \int_Y (\mu_y(A \cap g \cdot B) - \mu_y(A) \mu_y(g \cdot B))^2 \ d \nu(y) = 0.$$
Additionally, $G \acts (X, \mu)$ has \emph{spectral gap relative to $Y$} if for every sequence $\xi_n \in \BL^2_\mu(X)$ satisfying $\forall g \in G \ \lim_{n \rightarrow \infty} \| \rho(g)(\xi_n) - \xi_n\|_\mu = 0$, we have that $\lim_{n \rightarrow \infty} \| \xi_n - \Exp_\Sigma(\xi_n) \|_\mu = 0$. Finally, $G \acts (X, \mu)$ is \emph{strongly ergodic relative to $Y$} if for every sequence of sets $A_n \in \Borel(X)$ satisfying $\forall g \in G \ \lim_{n \rightarrow \infty} \mu(A_n \symd g \cdot A_n) = 0$, we have that $\lim_{n \rightarrow \infty} \int_Y \mu_y(A_n)(1 - \mu_y(A_n)) \ d \nu(y) = 0$.

\begin{cor} \label{cor:mix}
Let $G \acts (X, \mu)$ be a free {\pmp} action and let $G \acts (Y, \nu)$ be a factor action. Assume that either
\begin{enumerate}
\item [\rm (1)] $G \acts (X, \mu)$ is CPE$^+$ relative to $Y$, or
\item [\rm (2)] $G \acts (X, \mu)$ is CPE relative to $Y$ and every non-trivial intermediary factor action is free.
\end{enumerate}
Then $G \acts (X, \mu)$ is mixing relative to $Y$, and for every non-amenable $\Gamma \leq G$ the restricted action $\Gamma \acts (X, \mu)$ has spectral gap relative to $Y$ and is strongly ergodic relative to $Y$.
\end{cor}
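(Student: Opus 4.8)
The plan is to derive everything from Corollary \ref{cor:cpekoop}, which tells us that under either hypothesis the Koopman representation $\rho|_{\BL^2_\mu(X) \ominus \BL^2_\nu(Y)}$ is isomorphic to $\lambda^{\oplus \N}$, the countable direct sum of copies of the generalized left-regular representation associated with $\omega = (\Stab \times \phi)_*(\mu)$. Since the action $G \acts (X,\mu)$ is free, $\omega$ is concentrated on the pair $(\{1_G\}, y)$, so the generalized left-regular representation is just the ordinary left-regular representation of $G$ tensored with the $\BL^\infty_\nu(Y)$-module structure, i.e.\ $\BL^2_\nu(Y) \otimes \ell^2(G)$ with the diagonal-type action. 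Thus the content is: a relative Koopman representation isomorphic to (many copies of) the relatively-regular representation forces relative mixing, and, upon restricting to a non-amenable subgroup, relative spectral gap and relative strong ergodicity.

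First I would address \emph{relative mixing}. The relative mixing condition is exactly the statement that the matrix coefficients $g \mapsto \langle \rho(g)(f_1 - \Exp_\Sigma f_1), f_2 - \Exp_\Sigma f_2\rangle_\mu$, suitably interpreted fiberwise, tend to $0$; more precisely it says that for $f \in \BL^\infty_\mu(X)$, writing $f^\circ = f - \Exp_\Sigma(f)$, the fiberwise inner products $\langle \rho(g) f^\circ, f^\circ\rangle_{\mu_y}$ go to $0$ in $\BL^1(\nu)$ (after an $\BL^2$ bookkeeping step one reduces the displayed quadratic quantity to such matrix coefficients). These matrix coefficients are exactly the matrix coefficients of $\rho|_{\BL^2_\mu(X)\ominus\BL^2_\nu(Y)}$, a representation isomorphic to $\lambda^{\oplus\N}$ as a representation of $\BL^\infty_\nu(Y)\aprod G$. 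So it suffices to observe that the matrix coefficients of $\lambda$ (the generalized regular representation, which here is $\BL^2_\nu(Y)\otimes\ell^2(G)$), evaluated on vectors of the form $s\otimes v$ with $v$ finitely supported, vanish as $g\to\infty$ in the appropriate sense — this is immediate from the formula (\ref{eqn:leftreg}) since $\langle\lambda(u_g)\xi,\xi\rangle$ is supported on $S_g\times Y$, and in the free case $S_g = \varnothing$ for $g \neq 1_G$ — and then pass to general vectors by density and the uniform bound $\|\rho(g)\| = 1$. The only mild care needed is the translation between the quadratic $\BL^2_y$-expression in the definition of relative mixing and plain matrix coefficients, which is a routine Cauchy--Schwarz / $\BL^2$ expansion.

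Next, \emph{relative spectral gap} for $\Gamma \leq G$ non-amenable. Suppose $\xi_n \in \BL^2_\mu(X)$ is asymptotically $\Gamma$-invariant; replacing $\xi_n$ by $\xi_n - \Exp_\Sigma(\xi_n)$ we may assume $\xi_n \in \BL^2_\mu(X)\ominus\BL^2_\nu(Y)$ and must show $\|\xi_n\|_\mu \to 0$. The representation $\rho|_{\BL^2_\mu(X)\ominus\BL^2_\nu(Y)} \cong \lambda^{\oplus\N}$ restricted to $\Gamma$ is (many copies of) $\bigoplus$ of $\ell^2(\Gamma\backslash G)$-type pieces; because $G\acts (X,\mu)$ is free, each fiber is $\ell^2(G)$ as a $\Gamma$-representation, which decomposes as a direct sum of copies of the left-regular representation $\ell^2(\Gamma)$ of $\Gamma$ (one for each right coset $\Gamma g$). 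Hence $\rho|_\Gamma$ restricted to the orthocomplement of $\BL^2_\nu(Y)$ is a direct sum (actually a direct integral over $Y$) of copies of $\lambda_\Gamma^{\oplus\N}$, and since $\Gamma$ is non-amenable the regular representation $\lambda_\Gamma$ has no almost-invariant vectors (Hulanicki--Reiter), and neither does any direct sum / direct integral of copies of it with uniform constants. Concretely: for any finitely supported symmetric probability measure on $\Gamma$ whose support generates $\Gamma$, the associated Markov operator has norm $<1$ on $\ell^2(\Gamma)$, hence norm $<1$ on the whole direct integral, which forbids almost-invariant vectors. Therefore $\|\xi_n\|_\mu\to0$, which is relative spectral gap.

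Finally, \emph{relative strong ergodicity} follows from relative spectral gap: if $A_n\in\Borel(X)$ satisfies $\mu(A_n\symd g\cdot A_n)\to0$ for all $g\in G$ (equivalently $g \in \Gamma$), then the functions $\xi_n = \pone_{A_n}$ are asymptotically $\Gamma$-invariant in $\BL^2$, so by relative spectral gap $\|\pone_{A_n} - \Exp_\Sigma(\pone_{A_n})\|_\mu \to 0$; since $\Exp_\Sigma(\pone_{A_n})(x) = \mu_y(A_n)$ for $x$ over $y$, we get $\int_Y \big(\mu_y(A_n) - \mu_y(A_n)^2\big)\,d\nu(y) = \|\pone_{A_n} - \Exp_\Sigma(\pone_{A_n})\|_\mu^2 \to 0$, which, after noting $\mu_y(A_n)(1-\mu_y(A_n))$ is exactly this integrand up to rearranging $\Exp_\Sigma(\pone_{A_n}^2) = \Exp_\Sigma(\pone_{A_n})$, is precisely relative strong ergodicity. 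I expect the main obstacle to be purely bookkeeping: carefully justifying the decomposition of the direct-integral representation $\int_Y \ell^2(G)\,d\nu$ restricted to $\Gamma$ into copies of $\ell^2(\Gamma)$ uniformly in $y$, and confirming the norm bound on the Markov operator survives passing to the direct integral; both are standard but require writing the coset decomposition $G = \bigsqcup \Gamma g_i$ explicitly and invoking non-amenability in the form of a spectral radius strictly less than one. There is also the routine (but necessary) verification that the abstract $\BL^\infty_\nu(Y)\aprod G$-isomorphism of Corollary \ref{cor:cpekoop} respects the $\Gamma$-restriction and the disintegration over $Y$, which it does since $\BL^\infty_\nu(Y)$ sits inside the $*$-algebra.
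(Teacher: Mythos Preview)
Your proposal is correct and follows exactly the same approach as the paper: deduce everything from Corollary \ref{cor:cpekoop}, use freeness so that $\omega$ is supported on $\{\{1_G\}\} \times Y$ (making the generalized left-regular representation the ordinary one), verify that relative mixing and spectral gap are spectral properties holding for $\lambda^{\oplus \N}$, and derive relative strong ergodicity from relative spectral gap. The paper in fact gives only a brief sketch and defers the details to \cite{Ha}, so your write-up is a more explicit version of the same argument.
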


\begin{proof}
This is a direct consequence of Corollary \ref{cor:cpekoop}. We refer the reader to \cite{Ha} for details, but we briefly explain the idea here. Relative mixing is equivalent to a relative mixing condition on the Koopman representation. So both relative mixing and spectral gap are spectral properties. These properties hold for the generalized left-regular representation when $\omega$ is supported on $\{\{1_G\}\} \times Y$. Thus, by Corollary \ref{cor:cpekoop}, they hold for our action. Additionally, relative strong ergodicity is an immediate consequence of relative spectral gap.
\end{proof}

\thebibliography{999}

\bibitem{AGV}
M. Ab\'{e}rt, Y. Glasner, and B. Vir\'{a}g,
\textit{Kesten's theorem for invariant random subgroups}, Duke Mathematical Journal 163 (2014), no. 3, 465--488.


\bibitem{AS}
A. Alpeev and B. Seward,
\textit{Krieger's finite generator theorem for ergodic actions of countable groups III}, preprint. https://arxiv.org/abs/1705.09707.


\bibitem{B10b}
L. Bowen,
\textit{Measure conjugacy invariants for actions of countable sofic groups}, Journal of the American Mathematical Society 23 (2010), 217--245.

\bibitem{B12}
L. Bowen,
\textit{Sofic entropy and amenable groups}, Ergod. Th. \& Dynam. Sys. 32 (2012), no. 2, 427--466.







\bibitem{DoGo}
A. H. Dooley, V. Ya. Golodets,
\textit{The spectrum of completely positive entropy actions of countable amenable groups}, Journal of Functional Analysis 196 (2002), no. 1, 1--18.









\bibitem{GoSi02}
V. Ya. Golodets and S. D. Sinel'shchikov,
\textit{On the entropy theory of finitely generated nilpotent group actions}, Ergodic Theory Dyn. Systems 22 (2002), 1--25.


\bibitem{H}
B. Hayes,
\textit{Polish models and sofic entropy}, to appear in Journal of the Institute of Mathematical Jussieu.

\bibitem{Ha}
B. Hayes,
\textit{Mixing and spectral gap relative to Pinsker factors for sofic groups}, to appear in the Proceedings in honor of Vaughan F. R. Jones' 60th birthday conference.

\bibitem{Hb}
B. Hayes,
\textit{Sofic entropy of Gaussian actions}, to appear in Ergodic Theory and Dynamical Systems.

\bibitem{JKL}
S. Jackson, A.S. Kechris, and A. Louveau,
\textit{Countable Borel equivalence relations}, Journal of Mathematical Logic 2 (2002), No. 1, 1--80.

\bibitem{Ka81}
B. Kami{\'n}ski,
\textit{The theory of invariant partitions for $\Z^d$-actions}, Bull. Acad. Polonaise Sci. 29 (1981), 349--362.

\bibitem{KaL94}
B. Kami{\'n}ski and P. Liardet,
\textit{Spectrum of multidimensional dynamical systems with positive entropy}, Studia Math. 108 (1994), 77--85.





\bibitem{KL11a}
D. Kerr and H. Li,
\textit{Entropy and the variational principle for actions of sofic groups}, Invent. Math. 186 (2011), 501--558.

\bibitem{KL13}
D. Kerr and H. Li,
\textit{Soficity, amenability, and dynamical entropy}, American Journal of Mathematics 135 (2013), 721--761.

\bibitem{RS61}
V. A. Rokhlin and Ya. G. {\sinai},
\textit{Construction and properties of invariant measurable partitions}, Dokl. Akad. Nauk SSSR 141 (1961), 1038--1041.



\bibitem{S1}
B. Seward,
\textit{Krieger's finite generator theorem for ergodic actions of countable groups I}, preprint. http://arxiv.org/abs/1405.3604.



\bibitem{S18}
B. Seward,
\textit{Positive entropy actions of countable groups factor onto Bernoulli shifts}, preprint.






\bibitem{Ta02}
M. Takesaki,
The Theory of Operator Algebras I, Springer-Verlag, New York, 2002.



\end{document}